\newcommand{\R}{{\mathbb R}}
\newcommand{\C}{{\mathbb C}}
\newcommand{\Q}{{\mathbb Q}}
\newcommand{\Z}{{\mathbb Z}}
\newcommand{\PP}{{\mathbb P}}
\newcommand{\CP}{\C\PP}
\newcommand{\dbar}{\bar\partial}
\renewcommand{\phi}{\varphi}
\newcommand{\acal}{\mathcal{A}}
\newcommand{\dcal}{\mathcal{D}}
\newcommand{\hcal}{\mathcal{H}}
\newcommand{\jcal}{\mathcal{J}}
\newcommand{\lcal}{\mathcal{L}}
\newcommand{\ocal}{\mathcal{O}}
\newcommand{\rcal}{\mathcal{R}}
\newcommand{\scal}{\mathcal{S}}
\newcommand{\zcal}{\mathcal{Z}}
\newcommand{\half}{\frac{1}{2}}
\newtheorem{mainprop}{{\sc Proposition}}
\newtheorem{theo}{{\sc Theorem}}[section]
\newtheorem{maintheo}{{\sc Theorem}}
\newtheorem{lem}[theo]{{\sc Lemma}}
\newtheorem{prop}[theo]{{\sc Proposition}}
\newtheorem{rem}[theo]{{\sc Remark}}
\title[Gaussian Beams on Zoll manifolds and maximally degenerate Laplacians ]{Gaussian Beams on Zoll manifolds
and maximally degenerate Laplacians}
\author{Steve Zelditch}
\address{Department of Mathematics, Northwestern University,
Evanston IL,  60208-2730, USA}
\thanks{Research partially supported by NSF grant DMS-1206527}
\date{\today}
\begin{document}

\begin{abstract}  Gaussian beams exist along all closed geodesics of  a Zoll surface,
despite the fact that the algorithm for constructing them assumes that the closed
geodesics are non-degenerate.  Similarly, there exists a global Birkhoff normal 
for a Zoll Laplacian despite the degeneracy.  We explain why both  algorithms work
in the Zoll case and give an exact formula for the sub-principal normal form invariant. 
In the case of ``maximally degenerate" Zoll Laplacians, this invariant vanishes and we
obtain  new geometric constraints on such Zoll metrics.
\end{abstract}

\maketitle

\section{Introduction}

A {\it Gaussian beam} is a  sequence $\{\phi_{k}^{\gamma}\}_{k =1}^{\infty}$  of exact or approximate  eigenfunctions of the Laplacian $\Delta_g$ of a compact Riemannian manifold $(M, g)$  which  concentrates  along a stable closed geodesic $\gamma$.The
simplest Gaussian beams  have the approximate form $e^{i k s} e^{- k |y|^2}$ where $(s, y)$ are Fermi normal coordinates with
respect to $\gamma$, with $y $ the normal coordinates and $s$ the arc-length coordinate along $\gamma$. Thus
Gaussian beams  oscillate along $\gamma$ but have Gaussian decay in the orthogonal
direction.  More generally for each $q = 0, 1, 2 \dots, $ one may define higher  Gaussian beams
$\{\phi_{k, q}^{\gamma}\}$ which are of the form $e^{i k s} U_q(k y)$ where $U_q$ is
the qth Hermite function.  Gaussian beams have a long history in mathematics and physics, and we refer to J. Ralston's articles \cite{Ra,Ra2,Ra3} as well as \cite{BB,W} for the some of the  results. 

In all of the articles known to the author, the closed geodesic is assumed to be non-degenerate
 stable elliptic. That is, the eigenvalues of its linear Poincar\'e map $P_{\gamma}$ (defined below) are
assumed to be of unit modulus $e^{i \alpha_j}$ and the Floquet exponents $\{\alpha_j, 2\pi \}$ are assumed to be
independent over $\Q$, in particular to satisfy  $e^{i (m - n) \alpha_j} \not= 1$. The non-degeneracy condition is  needed to prove existence of solutions of certain
transport equations. It arises   for the same reason in the homological equation that appears when one tries
to put $\Delta_g$ into quantum Birkhoff normal form around $\gamma$ \cite{G2,Z2}. However, non-degeneracy
is not always a necessary condition for existence of Gaussian beams or normal forms: in  the case of $S^2$,
$P_{\gamma}$ is the identity operator and its closed geodesics are degenerate. Nevertheless it possesses Gaussian
beams around all closed geodesics. They are the highest  weight spherical harmonics $Y^{\ell}_{\ell}$ of degree $\ell$, which concentrate along the equator.  By rotating  $Y^{\ell}_{\ell}$ one  obtains a Gaussian beam  concentrating
along any closed geodesic of $S^2$.  We refer to \cite{SHang,Z4} for background and references.

Moreover,  there exist Gaussian beams along any closed geodesic of any Zoll surface, i.e. a surface all of whose geodesics are closed \cite{Be}, where again $P_{\gamma} = I$ for all $\gamma$. Unlike the case of the standard metric on $S^2$, the Gaussian beams are usually not eigenfunctions
of $\Delta_g$ but only {\it quasi-modes} or approximate eigenfunctions. However, they are
actual eigenfunctions in the case of  maximally degenerate Laplacians (see \S \ref{MDLSUB}). 
One of our goals is to find the obstructions to constructing Gaussian beams which solve the
eigenvalue equation to arbitrarily high order,
 and analyzing how they can vanish in the maximally degenerate case. 

As discussed in \cite{Z1,Z2}, the construction of Gaussian beams around a closed
geodesic $\gamma$ is closely related to   the
construction of  a quantum Birkhoff normal form for $\Delta$ in a tubular neighborhood around $\gamma$. There are obstructions to the construction of this normal form and apriori they might
not exist in the Zoll case. But
 in \cite{G},  Guillemin constructs  a {\it global} Birkhoff normal form for the
Laplacian on a Zoll surface. Hence the microlocal normal forms also exist (and coincide with
the restrictions of the global one). Parallel to Gaussian beams, one of the aims of this note is to indicate why the constructions are possible 
for Zoll Laplacians  even though they fail completely to satisfy the non-degeneracy assumptions. 

The existence of the global quantum Birkhoff normal form is due to the fact that Zoll geodesic flows are
symplectically equivalent. The symplectic equivalence is due to the fact that one may define a symplectic 
space of geodesics $G(S^2, g)$ of a Zoll surface $(S^2, g)$. To define $G(S^2,g)$ we note that the geodesic flow of a Zoll surface $(S^2, g)$  defines a free $S^1$ action on the unit cotangent
bundle $S^*_g S^2$ for the metric $g$ \cite{GrGr}. The space of geodesics is defined as the orbit space,
\begin{equation} \label{GSg} G(S^2, g): = S^*_g S^2 / S^1. \end{equation}
It is naturally a symplectic manifold \cite{W,Be}. Indeed, tangent vectors to geodesics are orthogonal Jacobi fields,
and the symplectic form is defined by the Wronskian (see \eqref{WR} of \S \ref{JF}).  By the Moser method, one
may construct a symplectic diffeomorphism from $G(S^2, g)$ to the standard space $G(S^2, g_0)$ of geodesics. 
It lifts to a contact transformation $\chi: S^*_g S^2 \to S^*_{g_0} S^2$ of the unit cosphere bundles intertwining
the geodesic flows. The symplectic map $\chi$ may be quantized to give a unitary Fourier integral operator $U = U_{\chi}$ which conjugates
the Laplacians modulo a remainder of order zero, i.e.
$U \Delta_g U^* = \Delta_0 + A_0, $  where $A \in \Psi^0(S^2)$. Here and henceforth, $\Psi^m(M)$ denotes
the class of mth order pseudo-differential operators on a manifold $M$. Starting with any quantization one can improve it by an infinite sequence of pseudo-differential quantizations
so that $A_g$ commutes with $\Delta_g$. The images $\{U \phi_{k, q}^{\gamma}\}$ of the Gaussian beams
for $\Delta_0$ are then Gaussian beam eigenfunctions for $\rcal^2= \Delta_g - A_0$. 

The  global quantum Birkhoff normal form is the statement that,
\begin{equation} \label{A} \sqrt{\Delta_g} = \rcal + A_{-1}, \;\; \mbox{or equivalently}\;\;\Delta_g = \rcal^2 + A_0  \end{equation}
where $\rcal$ has the eigenvalues $k + \half$ with multiplicity $2k + 1$ (just as for the
standard sphere), and 
where $A_{-1}$  (resp. $A_0$) is a pseudo-differential operator of order $-1$ (resp. zero)  commuting with $\rcal$ \cite{G,W}. 
This commutation implies that the  symbol $\sigma_{A_{-1}}$ of $A_{-1}$   (resp. $\sigma_{A_0}$ of $A_0$) is invariant under
the geodesic flow. We may then interpret it as a function $H$ on the space $G(S^2, g)$ of geodesics of the Zoll surface.
The eigenvalues of $A_{-1}$ in the kth cluster (where $\rcal = k + \half)$ have the form
$\mu_k(q)$ where $q = - k, \dots, k$. Thus the spectrum of $\sqrt{\Delta_g}$ is a small perturbation
$k + \half  + \mu_k(q)$ of that of the standard metric.  The eigenvalue distribution of $A_{-1}$ is determined by
the function $H$. Explicit formulae are given in \cite{Z1,Z3}. In effect we give yet another formula in this article.


\subsection{\label{MDLSUB} Maximally degenerate Laplacians}

A  Laplacian of a surface  is called   {\it maximally degenerate } (in the sense of \cite{Z1})
if  the multiplicities of the distinct eigenvalues of the  Laplacians 
are identical to the multiplicities $(1, 3,5, ..., 2k + 1...)$ of the eigenvalues $k(k + 1)$ of the
Laplacian   for the standard metric on $S^2$.  It is proved in  \cite{Z1} that in dimension 2, the metrices of  MDL (maximally degenerate Laplacian) metrics must be
Zoll metrics on $S^2$. Further it is proved that $A_{-1}$ and $A_0$ in \eqref{A} are smoothing operators. The main question is whether any non-standard Zoll Laplacians exist \cite{Y}.   It is known
that the standard metric is the only MDL among surfaces of revolution \cite{E,Z1}.  But most Zoll metrics
are not $S^1$ invariant. There exists an infinite dimensional moduli space of Zoll metrics, whose tangent space at the standard
metric is  isomoprhic to the space of  odd functions on $S^2$.

The question whether there exist non-standard maximally degenerate Zoll Laplacians is of interest (to the author)
because it is a very simple inverse spectral problem to state, yet  defies most of the known techniques, and 
in some sense is a test for the strength of these techniques.  The same question may be posed in higher dimensions,
and then it is not even known if the standard metric on $S^n$ is determined by its eigenvalues (with multiplicities) if $n \geq 7$. 
It has been proved that the standard metric on $S^n$ is locally determined by its spectrum and therefore does not
admit isospectral deformations. But it is not even known if the multiplicites of the eigenvalues of the standard $S^2$
are rigid, i.e. whether there exists a non-trivial Zoll deformation of the standard metric which preserves all of the
eigenvalue multiplicities.

The relevance of Gaussian beams to this inverse problem is that, in the MDL case, Gaussian beams are  eigenfunctions of the Laplacian. But   the Gaussian beam construction involves an ansatz which leads to an eikonal equation and various
transport equations. There exist obstructions to solving the transport equations, and they  must vanish
in the MDL case since Gaussian beams exist. The main purpose of this note is to explain how to write down
such obstructions explicitly in terms of geometric invariants.  The vanishing of the obstructions
gives  new conditions that an MDL must satisfy. 

In fact, it was proved in \cite{Z1} that the eigenvalues 
of an  MDL Zoll surface agree  with the standard eigenvalues $\ell(\ell + 1)$  up to rapidly decaying errors.  So the obstructions
can be determined exactly.  This idea is parallel to the strategy in \cite{Z2}, where the spectral projections $\Pi_{\ell}(x, y)$ kernels
associated to the eigenspace were studied. If $y$ is fixed, then $x \to \Pi_{\ell}(x, y)$ is the analogue of a zonal
(rotationally invariant) eigenfunction on $S^2$ and may be viewed as a quasi-mode associated to the Lagrangian
submanifold formed by meridian geodesics through the pole $y$.  The obstructions to the Gaussian beam construction
turn out to be quite  different from the obstructions found in \cite{Z3}  to the construction of  $ \Pi_{\ell}(x, y)$. 

 Our main result, Theorem \ref{GBO},  states that if $\Delta_g$ is an MDL then the the integral
over every closed geodesic $\gamma$ of   certain polynomials
in the curvature and Jacobi fields must vanish.   It turns out that the the polynomials are identical to the
ones that arise in the  algorithm for constructing  a quantum Birkhoff normal form for $\Delta$ around $\gamma$. 
In fact the normal forms algorithm and the algorithm for constructing Gaussian beams are equivalent and the
obstructions are the same. They are simpler to compute in the normal forms algorithm and so after reviewing
the Gaussian beam construction we mainly concentrate on normal forms.


\subsection{Gaussian beams and Birkhoff normal form} To state the results, we first recall 
 the construction of excited Gaussian beams
along  stable  elliptic geodesics. We follow the exposition of \cite{BB}, which constructs Gaussian beams for every
eigenvalue $q$  of a transvese Harmonic oscillator.   The $q$th excited Gaussian beam along an elliptic closed geodesic $\gamma$ has the
asymptotic  form
\begin{equation} \label{GB} \Phi_{kq}(s,\sqrt{r_{kq}}y) =e^{ir_{kq}s} \sum_{j=0}^{\infty}
r_{kq}^{-\frac{j}{2}} U_q^{\frac{j}{2}}(s,
\sqrt{r_{kq}}y,r_{kq}^{-1}).\end{equation} In unscaled coordinates we denote them by $\phi_{kq}^{\gamma}$. As  recalled in \S \ref{JF}, 
In the non-degenerate case, the  semi-classical parameter $r_{kq}$ has the form,
\begin{equation} \label{rkq} r_{kq} =  k +  \half + \frac{1}{2 \pi} \sum_{j=1}^n (q_j + \frac{1}{2}) \alpha_j\end{equation}
where as above, $e^{i \alpha_j}$ are the eigenvalues of the Poincar\'e map $P_{\gamma}$. The coefficient functions
 $U^{\frac{j}{2}}$ are obtained by solving transport equations. The construction of $\Phi_{kq}(s,\sqrt{r_{kq}}y)$
produces a  sequence of approximate  eigenvalues associated to $\gamma$ whose square roots have  the semi-classical expansion
\begin{equation} \label{EIGint} \lambda_{kq} \equiv r_{kq} + \frac{p_1(q)}{r_{kq}} + \frac{p_2(q)}{r_{kq}^2} + ...\end{equation}
The numerators $p_n(q) $ are polynomials of specific degrees in $q$. We will express $p_n(q)
$ as the eigenvalues of functions $p_n(I)$ where $I$ are transvese Harmonic oscillators
to $\gamma$, and will also consider  the Weyl symbols $p_n(|z|^2)$ of $p_n(I)$. We will  will show that $p_1(|z|^2)$ has the form $c_4 |z|^4 + c_0$. It is not quite obvious that they are spectral
invariants of $\Delta_g$ but in fact they are equivalent to the wave trace invariants along $\gamma$ (\cite{G2,Z2}).

Roughly speaking, one can write the transport equations in the form,
\begin{equation} \label{TER}   \frac{\partial}{\partial s}  (\mu(a) U^{\frac{j}{2}}_q) = RHS, \end{equation}
where $\frac{\partial}{\partial s}$ is differentiation along $\gamma$ and $\mu$ is a certain unitary transform. 
The obstruction to solving these equations is simply that  $\int_{\gamma} \left( \mbox{RHS} \right) ds = 0$.
As we found in \cite{Z1,Z2} (and will review below), the  RHS is a
complicated polynomial in the curvature $\tau$, its normal derivatives $\tau_{\nu}$ along $\gamma$ and
in the normal  Jacobi fields $Y = y \nu$ along $\gamma$. Here $\nu$ is the unit normal frame along $\gamma$.

In the Zoll case,  on constructs the normal form for $\Delta_g - A_g$ and all of the subprincipal terms vanish. One has $\alpha_j = 0$  and so $$r_{kq} = k + \half $$
for all $k,q$.
Similarly, one can construct Gaussian
beams along any closed geodesic  satisfying 
$$(\Delta_g - A_0 + (k + \half)^2) \phi^{\gamma}_k = O(k^{-\infty}), $$
which implies that 
$$ (\Delta_g + (k + \half)^2) \phi_{k}^{\gamma} = O(1), \;\; (k \to \infty). $$
There is already an obstruction to constructing such quasi-modes of order $O(1)$  of the form
 \begin{equation} \label{CUBIC} \int_{\gamma} y^3 \tau_{\nu} dt \equiv 0, \;\; (\forall \gamma),  \end{equation}
where  $y \nu$
is a normal Jacobi field along $\gamma$. 
It is the simplest example of an obstruction  \eqref{TER}. 
But the idenity holds for any  geodesic of any Zoll surface as a result of Jacobi's equation (see 
  (4.21) of   \cite{Z1}). This fact is an early indication of why the construction of Gaussian beams is possible
on Zoll surfaces. 

The principal symbol $H$ of $A_0$  can be obtained as the limit,
$$H(\gamma) = \lim_{k \to \infty}  \langle A_{0} \phi_k^{\gamma}, \phi_k^{\gamma} \rangle. $$
The limit only involves the ground state $q = 0$ Gaussian beams. One may obtain `higher
symbols' by studying the limit along pairs $(k, q)$ with $\frac{q}{k} \to \alpha$. 
We refer to \cite{UZ} for some results in this direction. Viewed as a function on the space of geodesics $\simeq G(S^2, g)$, the eigenvalues
$\mu_k(q)$ are roughly the values of $H$ on the qth Bohr-Sommerfeld level with Planck
constant $\frac{1}{k}$. The proof is based on putting $H$ into Birkhoff normal form (i.e. 
express it in terms of local action variables) and is in some sense related to the quantum
normal form of this article.  

In the MDL case one may construct Gaussian beams which are exact eigenfunctions with eigenvalue
$k (k + 1) + S(k)$ where $S(k)$ is rapidly decaying.  This leads to many further vanishing obstructions of
the type \eqref{CUBIC}. 
One  must separate out  integral formulae that hold universally on Zoll surfaces
from ones which put extra conditions on $g$ in the MDL case.

\subsection{Constraints on maximally degenerate Zoll metrics}

It is shown in \cite{Z1,Z3} that, in the maximally degenerate Zoll case, all   $p_j(q)$ in \eqref{EIGint}  are
independent of $q$ and depend only on $k$.  In dimension 2, all of the terms below $r_{kq} $ in the expansion
\eqref{EIGint} are zero. More precisely, we have

\begin{mainprop} \label{EXIST} \cite{Z1,Z3}

For a maximally degenerate Zoll surface, $A_{-1}$ in \eqref{A} is a smoothing operator, and there exists for each $q = 0, 1, 2 \cdots$ a sequence of $\Delta$-eigenfunctions
with the asymptotic form \eqref{GB}. The associated eigenvalues have the expansions \eqref{EIGint} with
$r_{kq} =  
(k +\half)$ and with all $p_n(q) = 0$. Thus, for every closed geodesic,
$\lambda_{kq} = k (k + 1) + S(k)$ where $S(k)$ is rapidly decaying. 
 
\end{mainprop}

This puts an infinite number of additional constraints on a Zoll metric in the form of vanishing integrals over 
every closed geodesic of certain metric invariants. The simplest non-universal one is given in

\begin{maintheo}  \label{GBO} Let $g$ be a Zoll  metric on $S^2$. Then a necessary 
and sufficient condtion for existence of
   Gaussian beams \eqref{GB} satisfying 
$$ (\Delta_g  - A_0 + (k + \half)^2) \Phi_{kq}^{\gamma} = O(k^{-2}), \;\; (k \to \infty) $$
 along $\gamma$  is that 
 $p_1(q) = 0$ for all closed geodesics. At  the closed geodsic $\gamma(t)$
with $t \in [0, \pi]$,  the coefficients $c_j$ $(j = 0, 2)$ of the Weyl symbol $p_1(|z|^2) = c_2 |z|^4 + c_0$  of $p_1(I)$ are given by the Weyl symbol $\sigma_{A_0}$ plus  
\begin{equation} \label{YINTS} \begin{array}{l} \frac{1}{2\pi}\int_0^{2 \pi}  [a_j |\dot{Y}|^4 +   b_{1j} \tau |\dot{Y}Y|^2 +
b_{2j}  \tau  \Re (\bar{Y} \dot{Y})^2
+c_{j}  \tau^2 |Y|^4 + d_j \tau_{\nu \nu} |Y|^4 +e_j \delta_{j0} \tau ] ds \\ \\
+\frac{1}{2\pi}\sum_{0\leq m,n \leq 3; m+n=3} C_{2;mn j} \Im \{\int_0^{2 \pi} \tau_{\nu}(s)\bar
{Y}^mY^n(s)[\int_0^s\tau_{\nu}(t)\bar {Y}^nY^m](t)dt] ds\} .  \end{array} \end{equation}
Here,  $Y$ is the complex periodic normal Jacobi field on $\gamma$ with $Y(0) = 1, Y'(0) = i$, and $a_j, b_{1j}, b_{2j}, c_j, d_j, e_j, C_{2; mnj}$ are certain universal non-zero constants which  depend on
the index $j$. One has $e_j = 0$ if $j = 2$ and $d_j = 0$ if $j = 0$. 

In the MDL case
the integrals \eqref{YINTS} vanish for all $\gamma$.

\end{maintheo}
In fact, there are many further integrals of the same kind which must vanish in the MDL case. 

We emphasize that the normal form exists for 
$\Delta_g - A_g = \rcal(\rcal + 1)$. They  can be used to determine the complete symbol of $A_g$. However,
the emphasis of this note is on the MDL case and we do not consider general Zoll surfaces in detail.  

Below we give a detailed algorithm for determining the terms and the universal coefficients. However it is rather
detailed and messy and it useful to have indirect checks on the calculations. 
 In the case of the round metric,
$\tau \equiv 1$ and  the  condition simplifies  to 
$$\begin{array}{l} \frac{1}{2\pi}\int_0^{2 \pi}  [a |\dot{Y}|^4 +   b_1  |\dot{Y}Y|^2 +
b_2   Re (\bar{Y} \dot{Y})^2
+c  |Y|^4 + e \delta_{j0}  ] ds   = 0.  \end{array}$$
Setting  $y = e^{i t}$ gives a  universal  linear relation among the universal constants
in the formula for $j = 0, 2$. 

 For MDL's,  $p_n(q) = 0$ for all $n$ and all $\gamma$ and that  gives an  infinite sequence of
identities  involving ever more derivatives which must vanish along every closed geodesic. 
Unfortunately the formulae for the higher polynomials $p_n(q)$ of \eqref{EIGint}  rapidly become extremely complicated. In
the inverse spectral theory of bounded plane domains one can sift out spectral  invariants by iterating,
i.e. by replacing $\gamma$ by its kth iterate $\gamma^k$. But in the Zoll case the integrand is periodic and
iterating does not seem to produce new invariants.

As will be discussed in \S \ref{J} (see \S \ref{RELATIONS}), there are relations among the  terms in the above integral due
to integral geometric identities on Zoll surfaces.  The result above should be compared with other
formulae in \cite{Z1,Z3}. In Theorem 3 of \cite{Z1} the principal symbol $\sigma_{A_{-1}} = H$ is
evaluated at $\gamma$ to equal a universal multiple of
\begin{equation} \label{H} H(\gamma) = \int_{\gamma} \left( \tau + \left[\frac{1}{3} \tau_{\nu}(s) y^3(s) \int_0^s \tau_{\nu}(t) J^3(t) dt -
\tau_{\nu}(s) u^2(s)  J(s) \int_0^s \tau_{\nu} u J^2 dt \right] \right) ds,  \end{equation}
which is similar to the expression of Theorem \ref{GBO} but is simpler in having 
no fourth degree terms in the Jacobi fields. Here, $u$ is the Jacobi fields with $u(0) = 1,
\dot{u}(0) = 0$ and $J$ is the area density in normal coordinates, which is essentially
the Jacobi field with $J(0) = 0, \dot{J}(0) = 1$.  The integral is obtained by regularizing
$\int_{\gamma} J^{\half} \Delta J^{-\half} ds. $

In our view, the universal
coefficients cannot be calculated reliably without a computer, and therefore we do not try to use the
exact expression in Theorem \ref{GBO}  to draw conclusions in inverse spectral theory. Instead we rely
on indirect arguments and on multiple calculations of the same quantity using different
techniques. For instance the calculation of  \eqref{H} in \cite{Z3}  is quite independent of
Theorem \ref{GBO} and suggests that there exist cancellations in the terms for $j = 0$. 
As reviewed in \S \ref{J} the calculation in \cite{Z3} is based on the construction of
`zonal quasi-modes' or spectral projections kernels in the MDL case. Unlike the Gaussian
beam construction, where one constructs $\Phi_{k,q}^{\gamma}$ for all $\gamma, q$,
we only considered the exact projection kernel and not a family of quasi-modes depending
on the same Lagrangian submanifold, which might require a normal form along the Lagrangian.

\subsection{Normal form invariants}

The expression in Theorem  \ref{GBO}  is the quantum Birkhoff normal form of degree 2 (i.e. $p_1(q)$ in \eqref{EIGint})
and was calculated in  \cite{Z2}  for a non-degenerate elliptic closed geodesic.  
In  the non-degenerate case there are  some additional terms which vanish in the Zoll case. 
 In \cite{Z2}  we proved:\bigskip

\noindent{\bf QBNF coefficients for k=0, dim =2}{\;\; \it  $p_1(q)$ is given in complex Fermi normal 
coordinates $z = y + i\eta$ as   $B_{0;4}|z|^4 + B_{0;0}$ where $B_{0; j}$
are given for both $j=0,4$ by polynomials in the normalized orthogonal Jacobi eigenvectors $Y$ of $P_{\gamma}$ by
$$B_{0;j} =   \frac{1}{L}\int_0^L [a |\dot{Y}|^4 +   b_1 \tau |\dot{Y}Y|^2 +
b_2 \tau  Re (\bar{Y} \dot{Y})^2
+c \tau^2 |Y|^4 + d \tau_{\nu \nu} |Y|^4 +e \delta_{j0} \tau ] ds + $$
$$ +\frac{1}{L}\sum_{0\leq m,n \leq 3; m+n=3} C_{1;mn}\frac{\sin((n-m)\alpha)}
{|(1 - e^{i(m-n)\alpha})|^2}
|\int_0^L  \tau_{\nu}(s)\bar{Y}^mY^n](s)ds|^2 $$
$$+\frac{1}{L}\sum_{0\leq m,n \leq 3; m+n=3} C_{2;mn}Im \{\int_0^L \tau_{\nu}(s)\bar
{Y}^mY^n(s)[\int_0^s\tau_{\nu}(t)\bar {Y}^nY^m](t)dt] ds \}.$$ }
\medskip
As mentioned above, the coefficients $C_{1; mn}$ and $C_{2;mn}$ are universal and come from commutator
identities in the 
Weyl calculus (see \S \ref{C}). In dimension 2, and in the non-degenerate case, the space of  complex normal Jacobi
fields is spanned by the normalized eigenvectors $\{Y, \bar{Y}\}$. 

The coefficient of  the middle term of the normal form in the non-degenerate elliptic case would blow up
in the Zoll case. Nevertheless the normal form exists in every Zoll case. The resolution is that the  corresponding coefficient automatically vanishes  for every closed geodesic. In the case of the second
normal form invariant, the bad coefficient is \eqref{CUBIC} and vanishes   for every Zoll surface.

\bigskip

\noindent{\bf Acknowledgements}  We thank the referee for comments and questions that helped improved
the exposition.

\section{\label{JF} Jacobi fields and Poincar\'e map on a Zoll manifold}

In this section we review the properties of Jacobi fields on Zoll manifolds.
Let $\gamma$ be a closed geodesic of length $L_{\gamma}$  of a Riemannian manifold $(M, g)$. 
  We denote by  ${\jcal}_{\gamma}^{\bot}
\otimes \C$  the space of complex normal Jacobi fields along $\gamma$, a symplectic
vector space of (complex) dimension 2n (n=dim M-1) with respect to the Wronskian
\begin{equation} \label{WR} \omega(X,Y) = g(X, \frac{D}{ds}Y) - g(\frac{D}{ds}X, Y). \end{equation}
 The linear Poincare map $P_{\gamma}$ is  the  symplectic map on ${\jcal}_{\gamma}
^{\bot} \otimes \C$ defined by $$P_{\gamma} Y(t) = Y(t + L_{\gamma}).$$  

The closed geodesic is {\it elliptic} if the eigenvalues of $P_{\gamma}$ are
of the form $\{ e^{\pm i \alpha_j}, j=1,...,n\}$. The associated normalized eigenvectors
will be denoted $\{ Y_j, \overline{Y_j}, j=1,...,n \}$,
\begin{equation} \label{PEV} P{\gamma} Y_j = e^{i \alpha_j}Y_j \;\;\;\;\;\;P_{\gamma}\overline{Y}_j =
e^{-i\alpha_j} \overline{Y}_j \;\;\;\; \omega(Y_j, \overline{Y}_k) = \delta_{jk} \end{equation}
 and relative to a fixed parallel
normal frame $e(s):= (e_1(s),...,e_n(s))$ along $\gamma$ they will be written in the form
$Y_j(s)= \sum_{k=1}^n y_{jk}(s)e_k(s).$  As mentioned in
the introduction, it is usually assumed in normal forms or Gaussian beams constructions that
$\gamma$ is non-degenerate elliptic, i.e.  
 $\{   \alpha_j,  j=1,...,n\}$ together with $\pi$,  are independent over ${\bf Q}$.

 In this article, we mainly consider surfaces, in which
case   ${\jcal}_{\gamma}^{\bot} \otimes \C$ has complex dimension two and as mentioned above
is spanned by the eigenvectors $\{Y, \bar{Y}\}$.  A normal Jacobi field along
$\gamma$  is simply of the   form $Y(s)= y(s) \nu (s),$ 
where  $\nu(s)$ is the parallel unit normal vector $\gamma$.  Jacobi's equation
is then a second order scalar equation,
$$  y'' + \tau  y = 0.$$
There is a two dimensional space of solutions: the vertical Jacobi
field $y_1$ with initial conditions $y(0) = 0, y'(0) = 1$ and the
horizontal Jacobi field $y_2$ with initial conditions $y(0) = 1,
y'(0) = 0$ with respect to a fixed choice of origin $\gamma(0)$ of $\gamma$. We consider the pair $(y, y')$ and form the symplectic   Wronskian matrix:
\begin{equation} \label{as} a_s:= \left( \begin{array}{ll}  y_2'(s)  \;\;\;& y_1'(s)\\  y_2(s)
\;\;\;&  y_1(s) \end{array} \right). \end{equation}
We modify the   Wronskian matrix so that its columns are given in terms of the
 normalized eigenvectors \eqref{PEV}  of the Poincar\'e map:

\begin{equation} \label{ACAL} \acal(s):=\left( \begin{array}{ll}  \Im\dot{Y} & \Re \dot{Y} \\  \Im Y &
\Re Y
\end{array}\right). \end{equation} The somewhat strange positioning of the elements is to maintain
consistency with our reference  \cite{Fo} on the metaplectic representation.



In the Zoll case, $P_{\gamma} = Id$, i.e. the normal Jacobi fields are periodic  (all $\alpha_j = 0$) and
the Wronskian matrices $a_s$ resp. $\acal(s)$  are periodic. The $a_s$ matrix is uniquely determined
but $\acal(s)$ is not since all normal Jacobi fields are eigenvectors of eigenvalue $1$. We may thus
assume 
\begin{equation} \label{Yy} Y  = y_1 + i y_2,\end{equation}
so that $a_s = \acal(s)$.

Since we have a family of closed geodesics, we may differentiate with respect
to the family. If we deform the geodesic in the direction of its unit normal, we obtain the variation $y_{\nu}$,
which satisfies (cf. \cite{Z1}, p. 573) 
\begin{equation} \label{JNU} y_{\nu}'' + \tau_{\nu} y^2 + \tau y_{\nu} = 0. \end{equation}To see this, we  let $\gamma_r(t)$ be the variation 
of $\gamma$ defined by $y \nu$ and then consider the family of Jacobi fields $Y_r = y_r \nu_r$ along $\gamma_r$
with $Y_0 = y\nu$. Since $y_r'' + \tau(\gamma_r(t)) y_r = 0$ we obtain the formula by differentiating
with respect to $r$ at $r = 0$ using that $\frac{d}{dr}|_{r = 0}  \gamma_r(t) = y(t). $ The notation
$y_{\nu}$ is short for $\frac{d}{dr} y_r |_{r = 0}. $

The following Lemma  illustrates universal integral formulae on Zoll surfaces. 

\begin{lem} \label{CUBE} On a Zoll surface, for any normal Jacobi fields $y \nu, y_2\nu $ one has  $\int_{\gamma} \tau_{\nu} y^2 y_2  ds = 0$ for all closed geodesics $\gamma$. \end{lem}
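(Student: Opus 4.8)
The plan is to exploit Jacobi's equation $y'' + \tau y = 0$ together with its differentiated form \eqref{JNU}, namely $y_\nu'' + \tau_\nu y^2 + \tau y_\nu = 0$, which relates the quantity $\tau_\nu y^2$ appearing in the integrand to derivatives of $y_\nu$. The key observation is that on a Zoll surface all normal Jacobi fields are periodic (since $P_\gamma = \mathrm{Id}$), so any exact $s$-derivative integrates to zero around $\gamma$; hence the strategy is to write $\tau_\nu y^2 y_2$, up to an exact derivative, as a combination of terms each of which is manifestly an exact derivative or is antisymmetric in a way that kills the integral.

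First I would substitute $\tau_\nu y^2 = -(y_\nu'' + \tau y_\nu)$ from \eqref{JNU} into $\int_\gamma \tau_\nu y^2 y_2\, ds$, obtaining $-\int_\gamma (y_\nu'' + \tau y_\nu) y_2\, ds$. Then I would integrate by parts twice in the first term to move both derivatives onto $y_2$: since everything is periodic, $\int_\gamma y_\nu'' y_2\, ds = \int_\gamma y_\nu y_2''\, ds$. Now using Jacobi's equation for $y_2$, i.e. $y_2'' = -\tau y_2$, the term $\int_\gamma y_\nu y_2''\, ds = -\int_\gamma \tau y_\nu y_2\, ds$ exactly cancels the remaining $-\int_\gamma \tau y_\nu y_2\, ds$ coming from the other piece. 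Thus the whole integral collapses to zero. (The same computation is the content of (4.21) of \cite{Z1}, which is cited in the excerpt for the special case $y_2 = y^3$-type identities; here it is the clean bilinear version.)

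The one point requiring care is the boundary-term bookkeeping in the integrations by parts: $y_\nu$ is the variation field $\frac{d}{dr} y_r|_{r=0}$, and although each $y_r$ is periodic along $\gamma_r$, one should check that the derivative $y_\nu$ is genuinely periodic along $\gamma$ — this is immediate because $r \mapsto y_r$ is a smooth family of periodic functions (with a common period in suitable arc-length normalization), so differentiation in $r$ preserves periodicity, and likewise $y_\nu'$ is periodic. Given that, all boundary terms over the closed curve vanish and the cancellation above is exact.

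I expect the main (and only) obstacle to be purely cosmetic: making sure the arc-length parametrization is handled consistently so that "periodic" really means period exactly $L_\gamma$ for the whole family, and that $y_2$ here denotes an arbitrary normal Jacobi field (not specifically the horizontal one $y_2$ from \eqref{as}) — the statement of Lemma \ref{CUBE} uses $y_2$ generically, and the argument above works verbatim for any solution of $y_2'' + \tau y_2 = 0$ regardless of initial conditions, since on a Zoll surface every such solution is automatically periodic. No degeneracy or non-resonance hypothesis is needed anywhere; the identity is a direct consequence of Jacobi's equation plus periodicity, which is exactly the kind of "universal integral formula on Zoll surfaces" the Lemma is meant to illustrate.
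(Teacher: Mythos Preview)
Your proposal is correct and matches the paper's own proof essentially step for step: multiply \eqref{JNU} by $y_2$, integrate, move the two $s$-derivatives from $y_\nu''$ onto $y_2$ by periodicity, and invoke Jacobi's equation $y_2'' + \tau y_2 = 0$ to cancel the remaining terms. Your additional remarks on the periodicity of $y_\nu$ and the generic use of $y_2$ are accurate and simply make explicit what the paper leaves implicit.
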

\begin{proof}

If we multiply \eqref{JNU} by $y_2$, integrate over $\gamma$ and transfer the $\frac{d}{ds}$ derivatives from $y_{\nu}''$
to $y_2$, then by Jacobi's equation $\int_{\gamma} (y''_2 y_{\nu} + \tau y_2  y_{\nu})  ds = 0$ and therefore
\eqref{CUBIC}  is a universal identity for Jacobi fields along
geodesics of Zoll surfaces. 
\end{proof}

\subsection{Fermi normal coordinates}

 Fermi normal coordinates are the normal coordinates defined by the exponential map $\exp: N_{\gamma, \epsilon} \to T_{\epsilon}(\gamma)$ from a ball
in the  normal
bundle of $\gamma$ to a tube of radius $\epsilon $ around $\gamma$. Thus we write $(s, y) = \exp_{\gamma(s)} y \cdot \nu_{\gamma(s)}, $ where $\nu(s)$ is a choice of unit normal frame along $\gamma$. We write the associated metric coefficients as $g_{00} = g(\partial_s, \partial_s), g_{0j} = 0$
and $g_{jk} = g(\partial_{y_j}, \partial_{y_k}) = 1$.

In Fermi coordinates along a geodesic, the  field $\frac{\partial}{\partial s}$ is a horizontal Jacobi field
pointing between nearby normal geodesics to $\gamma$ and tangent to the wave fronts. 
Each $\frac{\partial}{\partial y_j} $ is a geodesic vector field.  The volume density is given by  $j = \sqrt{\det g} $.  In dimension two,  $j = ||\frac{\partial}{\partial s}||.$

\section{Gaussiam beam quasi-modes}

The main result of \cite{BB, Ra2,Ra3} is the construction of quasi-modes of the
form \eqref{GB} along a non-degenerate stable elliptic closed geodesic. We write
\begin{equation} \label{GB2} U_{kq}(s, \sqrt{r_{kq}}y,r_{kq}^{-1}) = \sum_{j=0}^{\infty}
r_{kq}^{-\frac{j}{2}} U_q^{\frac{j}{2}}(s,
\sqrt{r_{kq}}y,r_{kq}^{-1}),\end{equation} 
and seek  $ U_q^{\frac{j}{2}}$ so that \eqref{GB} approximately solves the eigenvalue problem. 
\begin{equation} \label{EIGPROB} \Delta_y e^{ir_{kq}s} U_{kq}(s, \sqrt{r_{kq}}y,r_{kq}^{-1}) \sim
\lambda_{kq}e^{ir_{kq}s}
 U_{kq}(s, \sqrt{r_{kq}}y,r_{kq}^{-1})\end{equation}
The eigenvalues associated to $\gamma$  have the semi-classical expansion \eqref{EIGint}  
where $r_{kq}$ is given by \eqref{rkq}.
We now review the construction, following \cite{BB}.

It is sometimes convenient to express the various functions in terms of the scaled coordinates $\mu = \sqrt{r_{kq}} y$. The ground state Gaussian beam is a locally defined function in Fermi normal coordinates $(s, \mu)$ along
$\gamma$ of the form,
$$U_0 = \frac{1}{\sqrt{\det Y}}e^{\frac{i}{2} \left( \langle P Y^{-1}  \mu,  \mu \rangle \right)}. $$
Here, $Y$ is the matrix of Jacobi eigen-fields and $P$ is the matrix whose columns are $\frac{dY_j}{ds}$. 
Thus, $P = Y', PY^{-1} = Y' Y^{-1}$ and
$$Y^*P - P^* Y = i I \;\;\; Y^T P - P^T Y = 0. $$

Using transverse creation and annihilation operators one can construct  higher excited
states as  Gaussian beams  $U_q$ where
the  subscript $q = 0, 1, 2, \dots$  denotes the `transverse
energy level', i.e. the energy level of a Hermite operator in the normal directions to $\gamma$. 
When $q = 0$, the  $j = 0$ term  is defined by  \begin{equation} \label{U0} U_q^0 = U_0(s,\mu)  = (\det
Y(s))^{-1/2} e^{i\half \langle \Gamma(s) \mu, \mu\rangle}
\end{equation}
 where $\Gamma(s) := \frac{dY}{ds} Y^{-1} $ (see
\cite{Ra,BB}).  For higher $q$, the initial term $U^0_q$ is like the
$q$th excited state of the transverse harmonic oscillator and is defined by
$$U_q^0 = \Lambda^q U_0, $$ where 
$$\begin{array}{ll}
\Lambda =\sum_{k=1}^2 (i y_{k} D_{y_k} -\frac{dy_{k}}{ds} y_k) &
\Lambda^*=\sum_{k=1}^{2} (-i \overline{y}_{k}D_{y_k} -
\overline{\frac{dy_{k}}{ds}}y_k) \end{array} $$ are the transverse creation/annihilation
operators  adapted to $\gamma$. Write
$$U_r = \Lambda_1^{* r_1} \cdots \Lambda_m^{* r_m} U_0 = Q_r( \mu, s) U_0. $$
Here, $r =(r_1, \dots, r_m)$.  When $M $ is a  surface, there is just one $k$ index. 

As we now indicate, $U_{kq}$ and  $ U_q^{\frac{j}{2}}$ are  obtained by solving a sequence of transport equations. 

\subsection{Semi-classical scaling}



To determine the coefficients $U^{\frac{j}{2}}_q$ in \eqref{GB}, one  rescales the
Laplacian to convert \eqref{EIGPROB} into a semi-classical expansion.
It is convenient to replace  $\Delta$ by the unitarily equivalent  1/2-density Laplacian
 $$\Delta_{1/2} := j^{1/2} \Delta j^{-1/2},$$
which can be written  in the form:
$$\Delta_{1/2} = j^{-1/2}\partial_s g^{00}J \partial_s j^{-1/2}
+\sum_{ij =1}^{n} j^{-1/2}\partial_{y_i} g^{ij} J \partial_{ y_j}
j^{-1/2}$$
$$\equiv g^{00}\partial_s^2 + \Gamma^0 \partial_s +
 \sum_{ij=1}^n g^{ij} \partial_{u_i}\partial_{y_j} + \sum_{i=1}^{n} \Gamma^{i}
\partial_{y_i} + \sigma_0.$$
 From now on, we denote $\Delta_{\half}$  simply by $\Delta.$

In dimension 2, 

$$-\Delta = J^{-1/2}\partial_s g^{00}J \partial_s J^{-1/2}
+ J^{-1/2}\partial_{y}  J \partial_{ y}
J^{-1/2}$$
$$\equiv g^{00}\partial_s^2 + (\partial_s g^{ss}) \partial_s +
  \partial_{y}^2+  \sigma_0 = g^{00}\partial_s^2 + (\Gamma^s_{ss}) \partial_s +
  \partial_{y}^2+  \sigma_0 $$

For simplicity of notation, we put
$$h_{kq}:= (2\pi k + \sum_{j=1}^n (q_j +
\frac{1}{2}\alpha_j))^{-1}. $$
We observe that in \eqref{GB} the Gaussian beams are in scaled coordinates $\mu = \sqrt{h_{kq}}^{-\half} y,$
$$\Phi_{kq}(s, h_{kq}^{-\frac{1}{2}} y) = e^{i\frac{ s}{h_{kq} L}} U_{kq}(s,
\sqrt{h_{kq}}^{-\half} y ,h_{kq}) $$ and the eigenvalue
problem \eqref{EIGPROB} becomes
$$\Delta_u e^{\frac{i}{h_{kq}L}s}
U_{kq} (s, h_{kq}^{-\half} y ,h_{kq}) = \lambda(h_{kq})
e^{\frac{i}{h_{kq}L}s}U_{kq} (s, h_{kq}^{-\half} y ,h_{kq}).
$$ When indices are not needed we simply write $h = h_{kq}$.


We  transfer the scaling from the unknown function $U$ to the Laplacian using
the    unitary operators
$T_h$ and $M_h$ 
$$T_h (f(s,y)|ds|^{1/2}|dy|^{1/2}):= h^{-n/2} f(s, h^{-\half}y) |ds|^{1/2}|dy|^{1/2}$$
$$M_h(f(s,y)|ds|^{1/2}|dy|^{1/2}) := e^{\frac{i}{hL}s} f(s,y)|ds|^{1/2}|dy|^{1/2}.$$
We easily see that:
\begin{equation} \label{SCALING}  \left\{ \begin{array}{l} 
  T_h^* D_{y_j} T_h = h^{-\half} D_{ y_j} \\ \\
 T_h^* y_i T_h = h^{\half} y_i, \\ \\
 M_h^*D_s M_h =((hL)^{-1} + D_s).  \end{array} \right. \end{equation}

We then rescale an operator $A$ by
\begin{equation} \label{Ah} A_h := T_h^*M_h^*AT_hM_h. \end{equation}
The rescaled Laplacian then has the form,
$$\begin{array}{lll} -\Delta_{h} & = &  -(h)^{-2} g^{00}_{[h]} + 2i(h)^{-1}g^{00}_{[h]}\partial_s + i(h)^{-1}\Gamma^0_{[h]} \\&&\\&&+
 h^{-1}( \sum_{ij=1}^n g^{ij}_{[h]}\partial_{y_i}\partial_{y_j}) + h^{-\half}(\sum_{i=1}^{n} \Gamma^{i}_{[h]}
\partial_{y_i}) + (\sigma)_{[h]}, \end{array}$$
 the subscript $[h]$ indicating to dilate the coefficients of the operator in the form,
$f_h(s, y):=f(s, h^{\half} y).$
Expanding the coefficients in Taylor series at $h=0$, we obtain
the asymptotic expansion
\begin{equation} \label{DELTAH} \Delta_h \sim \sum_{m=0}^{\infty} h^{(-2 +m/2)}{\lcal}_{2-m/2} \end{equation}
where ${\lcal}_2=  1,$ ${\lcal}_{3/2}=0$ and where
\begin{equation} \label{lcal} \lcal_1 =: \lcal=  2 [i  \frac{\partial}{\partial s} + \half \{\sum_{j=1}^{n}
\partial_{y_j}^2 -
\sum_{ij=1}^{n} K_{ij}(s) y_i y_j\}]. \end{equation}
Here, $K_{ij}$ are the sectional curvatures. When the dimension is two,
$$\lcal=  D_s - \frac{1}{2} ( D_{y}^2 +\tau(s)
y^2).$$

The semi-classical eigenvalue problem $ \Delta_h U(s,  y,h) = \lambda(h)  U(s,
y,h) $ then becomes,

\begin{equation} \label{EIGPROB3} \begin{array}{l}
 \left(\sum_{m=0}^{\infty} h^{(-2 +m/2)}{\lcal}_{2-m/2} \right)  U(s,  y,h) = \lambda(h)  U(s,y,h) )  \end{array}\end{equation}  with
\begin{equation} U(s, y, h) =  \sum_{j=0}^{\infty}
h^{\frac{j}{2}} U_q^{\frac{j}{2}}(s,
y , h).\end{equation}   

By construction, 
$$\lcal U_q^0 = (q + \half) U_q^0. $$
The remaining terms are determined by `transport equations'  \eqref{TRANSPORT} of the form,
\begin{equation} \label{TE} \lcal U^{\frac{j}{2}}_q = \mbox{RHS}, \end{equation}
for a known RHS.

The equation for $U^{h_0}_q$ is then shown to take the form of a sequence of transport
equations with respect to the `parabolic operator' $\lcal$, 
\begin{equation} \label{TRANSPORT} \lcal U_q^{h_0} = - \lcal_1 U_q^{h_0-1} - \cdots - \lcal_{h_0} U^0_q =: \Psi^{h_0}_q U_0,  \end{equation}
where $\Psi_q^{h_0}$ is a polynomial in $y$ with smooth coefficients in $s$. 
We next recall how this works.

\subsection{Gaussian beam obstructions}
In this section, we  outline the algorithm in \cite{BB} for determining the  numerators $p_n(q)$ in \eqref{EIGint}. They 
are chosen to make transport equations solvable. We do not give the algorithm in detail because the
equivalent  Birkhoff normal forms algorithm
is more efficient, in that it works simultaneously for all $q$.

We express the right side of \eqref{TRANSPORT} in the form
\begin{equation} \label{PSIA} \Psi_q^{h_0}(y, s) U_0 = \sum_{(r)} A_{q r}^{h_0} U_r(y, s), \end{equation}
where (by orthogonality),
$$A_{q r}^{h_0} = \frac{1}{r! (2 \pi)^{m/2}} \int_{\R^m} \Psi_q^{h_0} U_0 \overline{U}_r d  \mu. $$

It follows that
$$A_{qr}^{h_0}(s + L) = e^{i (\kappa_q - \kappa_r) } A_{qr}^{h_0}(s), $$
with
$$\kappa_q = - \sum_{j = 1}^m (\half  + q_j)\alpha_j, \;\;\; \kappa_r = - \sum_{j = 1}^m (\half  + r_j)\alpha_j. $$
In the Zoll case,  $A_{qr}^{h_0}$ is periodic. 
If  $h_0$ is odd, we find that 
$A_{qq}^{h_0} (s) = 0$.




Write
\begin{equation} \label{UqB} U_q^{h_0} (y,s)= \sum_{(r)} B_{qr}^{h_0}(s) U_r = \Phi_q^{h_0}(y, s) U_0.  \end{equation}
In order that $U_q^{h_0} e^{i \kappa_q s}$ be periodic it is sufficient that
$$B_{qr}^{h_0} (s + L) = B_{qr}^{h_0} (s) e^{i (\kappa_q - \kappa_r)}. $$
One has  $\lcal U_r = 0$ and the transport equations \eqref{TRANSPORT} simplify to
\begin{equation}\label{BA} 2 i \frac{d}{ds} B_{qr}^{h_0} (s)= A_{qr}^{h_0} (s).  \end{equation}



When  $B_{qq}^{h_0}$ is periodic  the necessary and sufficient condition
for solvability is
$$\int_0^L A_{qq}^{h_0} ds = 0. $$
The numerators $p_n(q) $ of \eqref{EIGint}  are chosen to make this equation hold. 

Thus, to determine the explicit geometric obstructions we would need to calculate $A_{qq}^{h_0}$ explicitly,
and this is quite messy. We therefore turn to the normal forms construction, which calculates the same
obstructions in a somewhat simpler way.

\section{Quantum Birkhoff normal form construction}

In this section we review the quantum normal form construction from \cite{Z2}. As mentioned above, 
in the standard algorithm it is assumed
there that the closed geodesic $\gamma$ is non-degenerate elliptic.  In this article, we want
to understand why the algorithm still works in the 
Zoll case and how  it must be modified.  The modifications are emphasized in remarks. 

Let us summarize the key points of the normal form construction.   The goal is to 
 conjugate  $\sqrt{\Delta}$ microlocally around $\gamma$   to a function
of the tangential operator $D_s:=\frac{\partial}{i
\partial s}$  along $\gamma$  together with the transverse harmonic
oscillators
$$I_j=I_j(y,D_y) := \frac{1}{2} (D_{y_j}^2 + y_j^2) .$$
Here, $(s, y)$ are Fermi normal coordinates along $\gamma$. We assume that the length $L$ of $\gamma$ equals
$2 \pi$.
Given the Floquet exponents, we form the operator
$$\rcal = D_s + H_{\alpha}$$
where
$$H_{\alpha}:= \frac{1}{2}\sum_{k=1}^n \alpha_k I_k. $$

\begin{rem}
In the Zoll case, $H_{\alpha} = 0$  is zero and hence the Zoll Laplacian is conjugated to a function of $D_s$ alone. 
\end{rem}

In the non-degenerate case one has (\cite{Z2}):

\begin{theo}  \label{QBNF}  There exists a microlocally elliptic
  Fourier Integral operator $W$
from a conic neighborhood of $\R^+\gamma$ in $T^*N_{\gamma}-0$ to a conic neighborhood
of $T^*_{+}S^1$ in $T^*(S^1\times R^n)$ such that:

$$W \sqrt{\Delta_{\psi}}W^{-1} \equiv 
[ {\mathcal R} +\frac{ p_1(I_1,....,I_n)}{{\mathcal R}}
+\frac{p_2(I_1,...,I_n)}{({\mathcal R})^2} + ...+\frac{p_{k+1}(I_1,\dots,I_n)}{({\mathcal R})^{k+1}}+\dots]$$
$$ \equiv D_s + H_{\alpha} + \frac{\tilde{p}_1(I_1,\dots,I_n)}{ D_s} +
 \frac{\tilde{p}_2(I_1, \dots,I_n)}{(D_s)^2}
+\dots+\frac{\tilde{p}_{k+1}(I_1,\dots,I_n)}{( D_s)^{k+1}}+ \dots$$
where the numerators $p_j(I_1,...,I_n), \tilde{p}_j(I_1,...,I_n)$ are polynomials of degree
j+1 in the variables $I_1,...,I_n$. \end{theo}
\medskip

As explained in detail in \cite{Z2}, the $\equiv$ sign refers to a doubly-graded class   $O_m \Psi^r$ of operators
(or symbols). Here,   Also, $O_m \Psi^r$
denotes the space of pseudodifferential operators of order $r$ whose complete symbols
vanish to order $m$ at $(y,\eta)=(0,0)$. The
kth remainder term of the right sides of Theorem \ref{QBNF}  lie in the space $\bigoplus_{j=0}^{k+2} O_{2(k+2-j)}\Psi^{1-j}$.

The polynomials are the same as the numerators of \eqref{EIGint}.  We now go over the normal form algorithm
in the non-degenerate case. We then consider how to modify it so that it produces a normal form for a Zoll Laplacian
and in particular for am maximally degenerate one.

\subsection{Metaplectic Jacobi conjugation}

We work inductively on the semi-classical expansion \eqref{DELTAH}. 
The first step is to put $\lcal$ \eqref{lcal} into normal form along $\gamma$. We do
this by using a ``moving metaplectic conjugation'.

Let  $\mu$ denote the metaplectic representation (see \cite{Fo} for background).
  We apply $\mu$ to the matrix \eqref{ACAL} of Jacobi fields to obtain,
$$\mu(\acal):=\int_{\gamma}^{\oplus}\mu(\acal(s))ds $$
on $\int_{S^1}^{\oplus} L^2(\R^n) ds = \int_{\gamma}^{\oplus} L^2(N_{\gamma(s)})ds$. In other words,
$$\mu(\acal) f(s,y) = \mu(\acal(s))f(s,y)$$
where the operator on the right side acts in the $y$-variables. This conjugation simplifies the quadratic
term \eqref{lcal} of the semi-classical expansion \eqref{DELTAH}.

\begin{prop} \label{MUL} The image $\lcal$ of $D_s$ under $\mu$ is the operator \eqref{lcal}:
  $$\lcal:= \mu(\acal)^{*} D_s \mu(\acal) = D_s - \frac{1}{2}(\sum_{j=1}^n D_{y_j}^2 + \sum_{ij=1}^n K_{ij}(s)
y_i y_j).$$ 

\end{prop}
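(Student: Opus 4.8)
The plan is to verify the conjugation identity by direct computation, using the fact that $\mu(\acal(s))$ is, for each fixed $s$, the metaplectic operator attached to the linear symplectic map recorded by the Wronskian matrix $\acal(s)$ in \eqref{ACAL}. First I would recall the basic functorial property of the metaplectic representation: if $A(s) \in Sp(2n,\R)$ is a smooth path of symplectic matrices with $\mu(A(s))$ a corresponding smooth path of unitaries, then $\mu(A)^* D_s \mu(A) = D_s + \mu(A)^*[D_s,\mu(A)]$, and the commutator term $\mu(A)^* \frac{1}{i}\partial_s \mu(A)$ is the quantization of the quadratic Hamiltonian generating the infinitesimal symplectic motion $A(s)\inv \dot A(s) \in \go{sp}(2n,\R)$. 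Concretely, if we write $A(s)\inv \dot A(s) = \begin{pmatrix} a & b \\ c & -a^T \end{pmatrix}$ in the block form matching the $(\eta,y)$ splitting, then the derivative of the metaplectic cocycle is $\frac{i}{2}$ times the Weyl quantization of the quadratic form $\langle b\, \eta,\eta\rangle + 2\langle a\, y,\eta\rangle - \langle c\, y,y\rangle$ (up to the sign conventions of \cite{Fo}, which is precisely why $\acal(s)$ is written with that ``somewhat strange positioning'').

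The second step is to compute $\acal(s)\inv \dot\acal(s)$ from Jacobi's equation. Since the columns of $\acal(s)$ are built from $(\Im \dot Y, \Re\dot Y)$ over $(\Im Y, \Re Y)$ and each scalar component $y$ satisfies $y'' + \tau(s) y = 0$ (in dimension two; $y'' + K(s)y = 0$ componentwise in general with $K_{ij}$ the sectional curvatures), we have $\frac{d}{ds}\begin{pmatrix}\dot y \\ y\end{pmatrix} = \begin{pmatrix} 0 & -K(s) \\ I & 0\end{pmatrix}\begin{pmatrix}\dot y \\ y\end{pmatrix}$, so $\dot\acal(s) = \begin{pmatrix} 0 & -K(s) \\ I & 0\end{pmatrix}\acal(s)$, i.e. $\acal(s)\inv\dot\acal(s) = \acal(s)\inv\begin{pmatrix} 0 & -K(s) \\ I & 0\end{pmatrix}\acal(s)$ — but actually what enters the metaplectic derivative formula is the left-invariant form $\dot\acal\,\acal\inv = \begin{pmatrix} 0 & -K(s) \\ I & 0\end{pmatrix}$, which is already independent of the choice of fundamental solution. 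Quantizing the associated quadratic form then yields exactly $-\frac{1}{2}(\sum_j D_{y_j}^2 + \sum_{ij}K_{ij}(s)y_iy_j)$: the $b$-block contributes the $D_y^2$ term, the $c$-block $=K(s)$ contributes the potential, and the vanishing $a$-block means there is no cross term $y\cdot D_y$. Adding back the $D_s$ from $\mu(\acal)^* D_s\mu(\acal) = D_s + (\text{commutator})$ gives \eqref{lcal}.

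The main obstacle is bookkeeping the sign and ordering conventions so that the metaplectic cocycle derivative comes out with the correct sign in front of both the kinetic and potential terms — this is the one place where an error is easy and where the precise normalization of $\mu$ and of $\acal(s)$ in \cite{Fo} must be invoked carefully; the Wronskian normalization $\omega(Y_j,\bar Y_k) = \delta_{jk}$ (equivalently $\acal(s) \in Sp(2n,\R)$, which one checks from $Y^*P - P^*Y = iI$ and $Y^TP - P^TY = 0$) is what guarantees $\mu(\acal(s))$ is well-defined and unitary. I would also note that in the Zoll case there is nothing extra to do: $P_\gamma = \mathrm{Id}$ makes $\acal(s)$ periodic, so $\mu(\acal)$ is a genuine operator on $\int^\oplus_{S^1} L^2(\R^n)\,ds$ and the identity holds globally around $\gamma$, which is the point needed for the normal-form construction that follows.
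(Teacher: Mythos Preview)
The paper does not prove this proposition; it is simply stated as part of the review of the quantum normal form construction imported from \cite{Z2}, with the metaplectic background deferred to \cite{Fo}. So there is no ``paper's own proof'' to compare against here.

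Your approach is the standard one and is correct in outline: recognize that Jacobi's equation puts the Wronskian matrix into the form $\dot\acal(s)=\begin{pmatrix}0 & -K(s)\\ I & 0\end{pmatrix}\acal(s)$, so that $\acal(s)$ is a fundamental solution for the linear Hamiltonian system generated by $h(s,y,\eta)=\tfrac12\bigl(|\eta|^2+\langle K(s)y,y\rangle\bigr)$; then the metaplectic lift $\mu(\acal(s))$ is the corresponding quantum propagator for $Op^w(h)$, and the conjugation identity follows. One terminological slip: $\dot\acal\,\acal^{-1}$ is the \emph{right}-invariant Maurer--Cartan form, not the left-invariant one. More substantively, which of $\dot\acal\,\acal^{-1}$ or $\acal^{-1}\dot\acal$ appears depends on whether one writes the propagator equation as $\partial_s\mu(\acal)=d\mu(M)\,\mu(\acal)$ or $\partial_s\mu(\acal)=\mu(\acal)\,d\mu(N)$, and only one of these yields the unadorned $-\tfrac12(D_y^2+K(s)y^2)$ after conjugation; the other gives the $\acal$-conjugate of that operator. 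You correctly flag this as the delicate point. The cleanest way to bypass it is to argue directly that $\mu(\acal(s))$ solves the Schr\"odinger equation $\bigl(D_s-\tfrac12(D_y^2+K(s)y^2)\bigr)\mu(\acal)=\mu(\acal)\,D_s$ (as propagator for a time-dependent quadratic Hamiltonian this is exactly what the metaplectic representation provides), which is the intertwining statement itself and avoids choosing a Maurer--Cartan side.
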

\medskip

The relation of this conjugation to the Gaussian beam construction is as follows:

(i) $$\mu(\acal^{-1}) \gamma_0(s,y):=U_0(s,y)  = (\det Y(s))^{-1/2}
e^{i\half \langle \Gamma(s) y,y\rangle}$$ where $\Gamma(s) :=
\frac{dY}{ds} Y^{-1}.$

(ii) If $\gamma_q$ denotes the $q$ Hermite function, then  $$\mu(\acal^{-1})\gamma_q:=U_q =
\Lambda_1^{q_1}...\Lambda_n^{q_n} U_0. $$

Lemma \ref{MUL} shows that the first non-trivial term of  \eqref{DELTAH} simplifies  if we conjugate   by $\mu(a)$.
We therefore conjugate the entire expansion.   We denote the new operators by
\begin{equation} \label{DCALH} \begin{array}{lll} \dcal_h & = & \mu(\acal)^{-1} \Delta_h \mu(\acal) \\&&\\
& \sim&  \sum_{m=o}^{\infty} h^{(-2 + \frac{m}{2})}
\dcal_{2 -\frac{m}{2}}  \end{array} \end{equation}
  with  $\dcal_2 = I, \dcal_{\frac{3}{2}}=0,
\dcal_1=D_s$.

This  conjugation will make the transport equations
take the form $D_s \dcal_p = \mbox{RHS}$ and thus easy to solve. On the other hand, for explicit calculations
we need to use the $\lcal_q$'s.

\subsection{Microlocal conjugation}
We now microlocally conjugate \eqref{DCALH} to the Birkhoff normal form of
Theorem \ref{QBNF}.  
The intertwining opertor in \cite{Z2} has  the form,
\begin{equation} \label{Wh} W_h:=\mu(\acal)^*\prod_{k=1}^{\infty}W_{h \frac{k}{2}}\mu(a) \end{equation} with
$$W_{h \frac{k}{2}}:= \exp(ih^{\frac{k}{2}} Q_{\frac{k}{2}}). $$
Here, $Q_{\frac{k}{2}}$ is a polynomial differential operator which is constructed simultaneously with
the normal form. 

 As discussed in \cite{Z2}, the $Q_{\frac{j}{2}}$ are constructed so that the conjugation
removes all terms in the expansion \eqref{DCALH} except for functions of the actions $I_j$. The procedure is algebraic,
and is simplified by using the Weyl calculus because of its equivariance under metaplectic conjugation.

Each time we conjugate by one more factor of \eqref{Wh}, we change the terms of the expansion \eqref{DCALH},
and we need to introduce some notation for the new expansions. We only carry out the algorithm to two orders. 
The first conjugation  (by $\exp(ih^{\frac{1}{2}} Q_{\frac{1}{2}}) $)  produces
$$\begin{array}{lll} \dcal^{\half}_h: & = &\tilde{W}_{h \frac{1}{2}}^* \dcal_h \tilde{W}_{h \frac{1}{2}} 
 \sim  \sum_{n=0}^{\infty} h^{-2 + \frac{n}{2}} \sum_{j+m=n} \frac{i^j}{j!}
(ad\tilde{Q}_{\half})^j \dcal_{2 - \frac{m}{2}} \\&&\\&&
= h^{-2}L^{-2} + h^{-1}L^{-1}D_s +\sum_{n=3}^{\infty}
 h^{-2 + \frac{n}{2}} \dcal^{\half}_{2 - \frac{n}{2}}. \end{array} $$

We then conjugate once more with $ \tilde{W}_{h 1}= e^{i h \tilde{Q}_1},$
 to define $$\begin{array}{lll} \dcal^1_h: & = & \tilde{W}_{h 1}^*\dcal^{\half} \tilde{W}_{h 1} = h^{-2}L^{-2} + h^{-1}L^{-1}D_s +
h^{-\half}\dcal^{\half}_{\half} + \dcal^1_0(s,D_s, x,D_x) + \dots, \end{array}$$

Thus, the superscript of $\dcal^a_b$ indexes  the last factor of \eqref{Wh} used in the conjugation and
the subscript is the opposite of the power of  $h$,  $h^{-2 + \frac{n}{2}}$. 

\subsection{\label{C} Commutators and Weyl symbols}

In the Weyl calculus, commutators are  given symbolically by the odd expansion
$$a\#b - b\#a \sim \frac{1}{i} P_1(a,b) + \frac{1}{i^3 3!}P_3(a,b) +\dots$$
while anticommutators involve only the even transvectants.   One easily computes that
$$P_1(z^m\bar{z}^n, z^{\mu}\bar{z}^{\nu}) = C_{1;mn\mu\nu} z^{m+\mu -1}\bar{z}^{n + \nu -1}$$
where $C_{1;mn\mu\nu} = \half\sigma((m,n),(\mu,\nu))$ with $\sigma$ the standard symplectic inner product, and
that
$$P_3(z^m\bar{z}^n, z^{\mu}\bar{z}^{\nu}) = C_{(m,n),(\mu,\nu)} z^{m+\mu - 3}\bar{z}^{n + \nu -3}$$
for certain other coefficients $C_{(m,n),(\mu,\nu)}$. These are the $C$-coefficients in Theorem \ref{GBO}.

\subsection{First odd term} To begin with,
we construct $Q_{\half}$ so that conjugation by $\exp(ih^{\frac{1}{2}} Q_{\frac{1}{2}}) $  removes $\dcal_{\half}$ in \eqref{DCALH}, i.e. so that $\dcal_{\half}^{\half} = 0$.  This happens if $Q_{\half}$ solves
the {\it homological equation}
$$\{ i [D_s,\mu(r_{\alpha})^*Q_{\frac{1}{2}}\mu(r_{\alpha})]+
{\dcal}_{\frac{1}{2}} \}|_0= 0,$$
or equivalently,
\begin{equation} \label{FIRSTT} \partial_s\{\mu(r_{\alpha})^*Q_{\frac{1}{2}}\mu(r_{\alpha})\}|_0 =
-i  \{{\dcal}_{\frac{1}{2}}\} |_0. \end{equation}
Here,  
\begin{equation} \label{r} r_{\alpha_j}(s):= \left( \begin{array}{ll} \cos \alpha_j \frac{ s}{L} & \sin \alpha_j
\frac{ s}{L} \\ -\sin \alpha_j \frac{ s}{L} & \cos \alpha_j \frac{ s}{L}
 \end{array} \right). \end{equation}
where  $L$ is the length of $\gamma$. Since our main application is to Zoll surfaces, where the closed
geodesics all have the same primitive period \cite{GrGr}, we usually set $L  = 2 \pi$ or $1$ to simplify notation. In
the Zoll case they are the same. Also,  ``$A|_0$" denotes
restriction  of a pseudo-differential  operator $A $  on $S^1 \times \R^n$  to elements
in the kernel of $\rcal$.   Equivalently,
 after conjugation by $\mu(r_{\alpha})$, 
 to elements in the kernel of $D_s$ , that is, to functions
independent of $s$. Thus, if 
$A = A_2 D_s^2 + A_1 D_s + A_0$, then $A|_0 = A_0|_0$.

To solve \eqref{FIRSTT},  we rewrite it in terms of  complete Weyl symbols.
We denote by  $A(s,x,\xi)$  the complete Weyl symbol of the
operator $A(s,x,D_x)$.  Then \eqref{FIRSTT}  becomes
\begin{equation} \label{FIRSTTb} \partial_s \tilde{Q}_{\half}(s,x,\xi)= -i {\dcal}_{\half}|_0(s,x,\xi)
\end{equation}
with
$$\tilde{Q}_{\half}(s + L,x,\xi) = \tilde{Q}_{\half}(s,r_{\alpha}(L)(x,\xi)).$$
Here $L $ is the length of $\gamma$ (which we usually take to be $2 \pi$). 
We rewrite \eqref{FIRSTTb}  in the integral form
\begin{equation} \label{eq1} \tilde{Q}_{\half}(s,x,\xi) = \tilde{Q}_{\half}(0,x,\xi) + L \int_0^s
-i {\dcal}_{\half}|_0(u,x,\xi)du \end{equation}
and then  $\tilde{Q}_{\half}(0,x,\xi)$ is determined by the consistency condition
$$\tilde{Q}_{\half}(L,x,\xi) - \tilde{Q}_{\half}(0,x,\xi) =
L \int_0^L -i {\dcal}_{\half}|_0(u,x,\xi)du. $$
or in view of the periodicity properties of $Q$, by 
\begin{equation} \label{eq1b} \tilde{Q}_{\half}(0,r_{\alpha}(x,\xi)) - \tilde{Q}_{\half}(0,x,\xi) =
L \int_0^L -i {\dcal}_{\half}|_0(u,x,\xi) du. \end{equation}

\begin{rem} In the Zoll case, both sides of this equation must be zero and thus a solution of \eqref{eq1} 
is given by
$$\tilde{Q}_{\half}(s,x,\xi) =  L \int_0^s
-i {\dcal}_{\half}|_0(u,x,\xi)du.  $$
 \end{rem}

To solve \eqref{eq1b}, we  change   to complex
coordinates $z_j = x_j + i\xi_j$ and $\bar z_j = x_j - i \xi_j$ in which the
action of  $a(s)$ or $r_{\alpha}(L)$ is diagonal.
 Then \eqref{FIRSTTb} becomes,
$$\tilde{Q}_{\half}(0,e^{i\alpha}z, e^{-i\alpha}\bar z) -
\tilde{Q}_{\half}(0,z,\bar z) =
\int_0^L -i {\dcal}_{\half}|_o(u,z,\bar z ) du$$
We now use that ${\dcal}_{\half}(u,z,\bar z )$ is a polynomial of degree 3, and put
\begin{equation} \label{HALFS} \tilde{Q}_{\half}(s,z,\bar z) = \sum_{|m|+|n|\leq 3} q_{\half;mn}(s) z^m \bar z^n, \;\;{\dcal}_{\half}|_0(s,z,\bar z ) du = \sum_{|m|+|n|\leq 3} d_{\half;mn}(s)
z^m \bar z^n \end{equation}
then \eqref{FIRSTTb}  or \eqref{eq1b} becomes
\begin{equation} \label{FIRSTTpoly} \sum_{|m|+|n|\leq 3} (1 - e^{(m - n) \alpha}) q_{\half; mn}(0) z^m \bar z^n
= -i \sum_{|m|+|n|\leq 3} \bar d_{\half;mn}
z^m \bar z^n . \end{equation}
Here, the bar in $\bar d_{\half;mn}$ denotes the time average over one period. 
The obstruction to solving this equation if the right side is non-zero  is invertibility of the coefficients $(1 - e^{(m - n) \alpha})$.
Since there are no terms with $m=n$ in this (odd-index) equation,  there is no obstruction to
the solution of \eqref{FIRSTTpoly} if
the $\alpha_j$'s are independent of $\pi$ over $\Z$.

\begin{rem} \label{REM1} In the Zoll case, both sides of \eqref{FIRSTTpoly}   equal zero by Lemma \ref{CUBE}  (as mentioned above).
 We  solve with
$q_{\half ;mn}(s) = L \int_0^s d_{\half; mn}(s) ds.  $ It follows that $q_{\half; m,n}(0) = 0. $ \end{rem}

\subsection{First even term}

Next we  seek $\tilde{Q}_1$
so that conjugation by $ \tilde{W}_{h 1}= e^{i h \tilde{Q}_1}$ removes as much
as possible of the first even term of the conjugate $\dcal_h^{\half}$ of \eqref{DCALH} by $\exp(ih^{\frac{1}{2}} Q_{\frac{1}{2}}) $ in
the previous step. This step becomes computationally involved since we need to conjugate  \eqref{DCALH}  
by  $\exp(ih^{\frac{1}{2}} Q_{\frac{1}{2}}) $ and introduce notation for the new terms. For simplicity of exposition
we refer to \cite{Z2} for the details of all the steps and just explain the notation for the first two conjugations. 


 In the second conjugation, 
we seek an element $\tilde{Q}_1(s,x,D_x)\in \Psi^*(S^1 \times \R^n)$ and a function
$f_0(I_1,...,I_n) $ of the Harmonic Oscillators so that
$${\dcal}^1_h:=\tilde{W}_{h 1}^*{\dcal}^{\half} \tilde{W}_{h 1} = h^{-2}L^{-2} + h^{-1}L^{-1}D_s +
h^{-\half}{\dcal}^{\half}_{\half} + {\dcal}^1_0(s,D_s, x,D_x) + \dots$$
with
\begin{equation} \label{DCAL0} {\dcal}_0^1(s,D_s,x, D_x)|_0 = f_0(I_1,...,I_n),  \end{equation}
  Note that due to the choice of the first conjugation,  ${\dcal}^1_{\half} = {\dcal}^{\half}_{\half} = 0.$ We now
choose the second $\tilde{Q}_1$ to remove as much as possible of the $h^0$ term, i.e. all of the ``off-diagional''
terms of the Weyl symbol of $\dcal_0^1$. 
First, we observe that
$$\dcal_0^1 = \dcal_0^{\half} + i [D_s, \tilde{Q}_1].  $$
 Thus, we seek  $\tilde{Q}_1$ so that
$$\{ i [D_s,\tilde{Q}_1] + {\dcal}_0^{\half}\}|_0 = f_0(I_1,...,I_n). $$
 or equivalently
\begin{equation} \label{SECONDTT} \partial_s\tilde{Q}_1|_0 =\{-{\dcal}_0^{\half} + f_0(I_1,...,I_n)\}|_0. \end{equation} 
We further  note that 
\begin{equation} \label{D0HALF} {\dcal}^{\half}_0 = {\dcal}_0 + \frac{i}{2} [\dcal_{\half}, \tilde{Q}_{\half}]. \end{equation}
Recall here that $\dcal_{\half}$ is the term of order $h^{-\half}$ in \eqref{DCALH}
and that $\tilde{Q}_{\half}$ is accompanied
by $h^{\half}$. There is an additional double commuator $ [ [D_s,\tilde{Q}_{\half}], \tilde{Q}_{\half}]$ term,
but since  $[D_s,\tilde{Q}_{\half}] = i {\dcal}_{\half}$ by the first step,  we get 
$$\frac{i}{2} [\dcal_{\half}, \tilde{Q}_{\half}] = i [\dcal_{\half},\tilde{Q}_{\half}] - \frac{1}{2} [ [D_s,\tilde{Q}_{\half}], \tilde{Q}_{\half}].$$

 
 \subsection{\label{HOMOLNOND} Solution of the second homological equation}

We rewrite \eqref{SECONDTT}  in terms of  complete Weyl symbols and obtain the second homological equation,
$$ \partial_s \tilde{Q}_1(s,z, \bar z) = -i \{{\dcal}^{\half}_0|_0 (s,z,\bar z) -
f_0(|z_1|^2,\dots, |z_n|^2)\} $$
or equivalently
$$\tilde{Q}_1(s,z,\bar z) = \tilde{Q}_1(0,z,\bar z) -i \int_0^s
[{\dcal}^{\half}_0|_0 (u,z,\bar z) -
f_0(|z_1|^2,\dots, |z_n|^2)] du$$
and solve simeltaneously for $\tilde{Q}_1$ and $f_0$. The consistency condition
determining a unique solution is that
$$\tilde{Q}_1(L,z,\bar z) = \tilde{Q}_1(0,z,\bar z) -i  \int_0^L
[{\dcal}^{\half}_0|_0 (u,z,\bar z) -
f_0(|z_1|^2,\dots, |z_n|^2)] du. $$
or 
$$\tilde{Q}_1(0,e^{i\alpha}z,e^{-i\alpha}\bar z) - \tilde{Q}_1(0,z,\bar z) = -i  \{\int_0^L
{\dcal}^{\half}_0|_0 (u,z,\bar z)du  -
 f_0(|z_1|^2,\dots, |z_n|^2) \}.$$

Now, ${\dcal}^{\half}_0|_0 (u,z,\bar z)$ is a polynomial of degree
4. Indeed, $\dcal_0(u, z, \bar{z}) $ is a polynomial of degree 4, and  the Weyl symbol
of $[\dcal_{\half}, \tilde{Q}_{\half}]$ is also of degree since it is the Poisson bracket of polynomials
of degree $3$ (see \S \ref{C}). We assume  that the Weyl symbol $ \tilde{Q}_1(s,z,\bar z)$ is a polynomial of degree 4.  We put
$$ \tilde{Q}_1(s,z,\bar z) = \sum_{|m|+|n|\leq 4} q_{1;mn}(s) z^m \bar z^n, \;\;\;\;\;\;
 f_0(|z_1|^2,\dots, |z_n|^2) = \sum_{|k|\leq 2} c_{0 k} |z|^{2k} $$
and
$${\dcal}^{\half}_0|_0 (s,z,\bar z)du :=\sum_{|m|+|n|\leq 4} d_{0; mn}^{\half}(s) z^m \bar
z^n,\;\;\;\;\;\;\;\;\;\bar d^{\half}_{0;mn}:= \frac{1}{L}\int_0^Ld^{\half}_{0;mn}(s)ds. $$

The second homological equation thus becomes, 
\begin{equation} \label{HOMO2} \left\{ \begin{array}{l}
\sum_{|m|+|n|\leq 4, m \not= n} (1 - e^{i (m-n)\alpha}) q_{1;mn}(0) z^m \bar z^n 
=\sum_{|m|+|n|\leq 4, m \not= n} \overline{d_{0; mn}^{\half}} z^m \bar
z^n, \\ \\
\overline{d_{0; mm}^{\half}} =: f_0(I). \end{array} \right.
  \end{equation}

In the non-degenerate case, we can solve for the off-diagonal coefficients,
$$q_{1; mn}(0) = -i  (1 - e^{i (m-n)\alpha})^{-1} \bar d_{0; mn}^{\half}. $$
We cannot divide when $m = n$, and  must set the diagonal coefficients equal to zero.  The coefficients $c_{0 k}$
of $f_0$ (the normal form)
are then determined by
$$c_{0 k} =   \bar d_{0; kk}^{\half}. $$

It is evident that $\tilde{Q}_1$ and $f_0(I_1,\dots,I_n)$ are
even polynomial pseudodifferential  operators of degree 4 in the variables $(x,D_x)$. The coefficients $c_{0 k}$ are 
essentially the  QBNF invariants.

\begin{rem} \label{REM2} In the MDL Zoll case, the left side of the off-diagonal sum $m \not= n$ is zero, and therefore
a necessary condition for solvability is that $ \overline{d_{0; mn}^{\half}}  = 0$ for all $m\not= n$
with $m + n \leq 4$. In the maximally degenerate case, the normal form term $f_0$ is also zero and
therefore we also have $   \bar d_{0; kk}^{\half} = 0$ for $k  = 0, 1, 2$.  

Again we solve for $\tilde{Q}_1$ by direct integration,
$$q_{1; m, n}(s) = \int_0^s (d^{\half}_{0; m, n}(s) - \delta_{m k} \delta_{n k} c_{0k} |z|^{2k} ) ds. $$\end{rem}

.

\section{\label{Z} Maximally degenerate Zoll case}

As we have remarked  in the summary above, and the solvability  of the homological equations
is possible in the Zoll case where   $ (1 - e^{i (m-n)\alpha}) = 0$  only when the right side of the homological equations vanish. In this section we review the results that prove that the normal form does exist and hence the homological
equations are solvable when $\Delta_g$ is maximally degenerate. In the general Zoll case, one needs to subtract
the operator $A_0$ to solve the equations and $A_0$ is determined by the solvability of the equations.

As mentioned in the introduction, it is proved in \cite{W,G} that on  any Zoll surface there exists a global unitary Fourier integral operator $U$ intertwining $\Delta_g$ with the standard Laplacian
modulo a remainder of order zero, i.e.
\begin{equation} \label{UCONJ} U \Delta U^* = \Delta_0 + A_0, \;\;\; U \sqrt{\Delta} U^* = \sqrt{\Delta_0} + A_{-1}. \end{equation}

Thus, $\Delta_g - A_0$ has a complete degenerate  global quantum Birkhoff normal form. It restricts around each closed geodesic
to a microlocal one. Hence the homological equations must be solvable. 


\subsection{\label{MDL} Spectral projections and unitary intertwining operators in the MDL case}

The   unitary intertwining can be constructed directly in terms of Gaussian beams.
First in the standard $S^2$, the spectral projection to the kth eigenspace  $\hcal_k$ of spherical harmonics on $S^2$ of degree $k$ satisfies the following identity,
$$\Pi_k(x, y) = \int_{G(S^2, g_0)} \phi_k^{\gamma}  \otimes \phi_k^{\gamma} d\mu(\gamma). $$
Here, $G(S^2, g_0)$ is the symplectic space of geodesics and $ d\mu(\gamma)$ is the symplectic area form.
Also, $\phi_k^{\gamma}$ is the Gaussian beam (highest weight spherical harmonic) of degree $k$.

One then constructs Gaussian beams on any Zoll surface  by
$U^* \phi_k^{\gamma}$, since
$$\Delta U^* \phi_k^{\gamma} = U (\Delta_0 + Q_0) \phi_k^{\gamma} = \lambda_0(q, h) U \phi_k^{\gamma} + U Q_0 \phi_k^{\gamma}. $$
The second term is $\ocal(1)$.  In general, $U^* \phi_k^{\gamma}$ is only a quasi-mode of order zero.

The unitary intertwining operator $U$ thus takes  $\phi_k^{\gamma}$
to the ground state Gaussian beam $\psi_k^{\chi(\gamma)}$ for $g$  along $\chi(\gamma)$ where $\chi: G(S^2, g_0) \to G(S^2, g)$ is 
a symplectic diffeomorphism. We therefore have,

\begin{prop} Let $\chi: G(S^2, g) \to G(S^2, g_0)$ be a symplectic diffeomorphism \eqref{GSg}. Let $(S^2, g)$ be MDL.
Then, a  unitary intertwining operator for the $k$th cluster is defined by
$$U_k(x, y) =  \int_{G(S^2, g_0)} \psi_k^{\chi(\gamma)}  \otimes \phi_k^{\gamma} d\mu(\gamma). $$
\end{prop}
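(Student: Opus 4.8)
The plan is to verify the intertwining property $U_k \Delta_g = \Delta_0 U_k$ (up to the appropriate cluster correction) and unitarity of $U_k$ by exploiting the reproducing structure of the integral defining it, combined with the fact that in the MDL case the Gaussian beams $\psi_k^{\chi(\gamma)}$ are \emph{exact} eigenfunctions of $\Delta_g$ (Proposition \ref{EXIST}), not merely quasi-modes. First I would observe that for the standard metric one has the identity $\Pi_k(x,y) = \int_{G(S^2,g_0)} \phi_k^{\gamma}(x)\,\overline{\phi_k^{\gamma}(y)}\, d\mu(\gamma)$ expressing the spectral projector onto $\hcal_k$ as a superposition of highest-weight spherical harmonics; this is the classical fact that the $\phi_k^{\gamma}$ form an overcomplete family in $\hcal_k$ parametrized by the geodesic space, and it is stated in \S\ref{MDL}. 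From this, the operator $U_k$ with kernel $\int_{G(S^2,g_0)} \psi_k^{\chi(\gamma)}(x)\,\overline{\phi_k^{\gamma}(y)}\, d\mu(\gamma)$ is, by construction, the composition of the inclusion-coordinatization of $\hcal_k(g_0)$ by the $\phi_k^{\gamma}$ with the map sending the frame element $\phi_k^{\gamma}$ to $\psi_k^{\chi(\gamma)}$.

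The key steps, in order, are: (1) show $U_k$ annihilates the orthogonal complement of $\hcal_k(g_0)$ and maps into the $k$th eigencluster $\hcal_k(g)$ of $\Delta_g$ --- the latter because in the MDL case $\psi_k^{\chi(\gamma)}$ is an exact $\Delta_g$-eigenfunction with eigenvalue $k(k+1)+S(k)$ by Proposition \ref{EXIST}, so $U_k$ intertwines $\Delta_0$ restricted to $\hcal_k$ with $\Delta_g$ restricted to $\hcal_k(g)$ up to the rapidly decaying scalar $S(k)$; (2) prove $U_k$ is an isometry from $\hcal_k(g_0)$ onto $\hcal_k(g)$ by computing $U_k^* U_k$ and using the reproducing identity together with the fact that $\chi$ is a symplectic diffeomorphism, hence preserves the symplectic area form $d\mu(\gamma)$, so the Gram structure of the family $\{\psi_k^{\chi(\gamma)}\}$ matches that of $\{\phi_k^{\gamma}\}$ --- more precisely, $\langle \psi_k^{\chi(\gamma)}, \psi_k^{\chi(\gamma')}\rangle$ must be shown to equal the corresponding pairing $\langle \phi_k^{\gamma},\phi_k^{\gamma'}\rangle$ in the standard cluster, which follows because the leading-order Gaussian beam overlap is a universal function of the symplectic distance between the geodesics on $G(S^2,g)$ and the microlocal normal form (the degenerate global Birkhoff normal form of \eqref{A}, with $A_{-1}$ smoothing in the MDL case) forces the subleading corrections to vanish to infinite order; (3) assemble $U = \bigoplus_k U_k$ over clusters and check that it is the global unitary Fourier integral operator of \eqref{UCONJ}, i.e. that it quantizes $\chi$ and conjugates $\Delta_g$ to $\Delta_0$ modulo a smoothing operator, which pins down that $A_{-1}$ and $A_0$ are smoothing --- consistent with Proposition \ref{EXIST}.

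I expect the main obstacle to be step (2): controlling the inner products $\langle \psi_k^{\chi(\gamma)}, \psi_k^{\chi(\gamma')}\rangle$ with enough precision to conclude exact isometry rather than isometry-modulo-$O(k^{-\infty})$. The cleanest route is probably not a direct computation of Gaussian beam overlaps but an indirect one: since $(S^2,g)$ is MDL, $\hcal_k(g)$ has dimension exactly $2k+1$, the same as $\hcal_k(g_0)$, and one knows abstractly (from \cite{G,W} and the global normal form) that there \emph{exists} a unitary $U$ with $U\Delta_g U^* = \Delta_0 + A_0$, $A_0$ smoothing; it then suffices to show the specific kernel written down reproduces such a $U$ on each cluster, which reduces to checking that $U_k$ has the right rank and the right intertwining, and then normalizing. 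So the real content is establishing that the rank-$(2k+1)$ operator $U_k$ is injective on $\hcal_k(g_0)$ --- equivalently that the family $\{\psi_k^{\chi(\gamma)}\}_{\gamma}$ spans $\hcal_k(g)$ --- and this follows from the fact that $\chi$ is a diffeomorphism (so the parametrization is non-degenerate) together with the completeness of $\{\phi_k^{\gamma}\}$ in the standard cluster, transported through $U$. A secondary technical point worth flagging is the precise meaning of ``unitary intertwining operator for the $k$th cluster'': I would take it to mean a unitary $\hcal_k(g_0) \to \hcal_k(g)$ conjugating the (scalar) action of $\Delta_0$ to that of $\Delta_g$ up to the rapidly decaying shift $S(k)$, so that $\bigoplus_k U_k$ is unitary on $L^2(S^2)$ and realizes \eqref{UCONJ} with smoothing error.
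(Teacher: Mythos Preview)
Your proposal is essentially correct and aligns with the paper's (very brief) justification, though you have written out considerably more detail than the paper itself provides. The paper does not give a formal proof of this proposition; it simply states it as an immediate consequence of the preceding discussion: the spectral projection identity $\Pi_k = \int_{G(S^2,g_0)} \phi_k^{\gamma}\otimes\phi_k^{\gamma}\,d\mu(\gamma)$ on the standard sphere, together with the fact that the global unitary FIO $U$ of \cite{G,W} carries $\phi_k^{\gamma}$ to the Gaussian beam $\psi_k^{\chi(\gamma)}$. In other words, the paper's implicit argument is simply $U_k = U\Pi_k$, from which unitarity on $\hcal_k(g_0)$ and the intertwining property are inherited directly from $U$.

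You correctly anticipate this in your discussion of step~(2), where you note that the cleanest route is not to compute Gaussian-beam overlaps directly but to invoke the abstract existence of $U$ and then verify that the written kernel reproduces $U$ cluster by cluster. That is exactly the paper's (unstated) reasoning. Your steps~(1) and~(3) are accurate elaborations, and your concern about controlling $\langle \psi_k^{\chi(\gamma)},\psi_k^{\chi(\gamma')}\rangle$ to all orders is well-founded---a direct overlap computation would indeed only give isometry modulo $O(k^{-\infty})$, which is why the indirect route through the pre-existing $U$ is the right one. The paper sidesteps this issue entirely by taking $U$ as given.
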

In the maximally degenerate case, this formula constructs $U$  so that $U \phi_k^{\gamma}$
is an eigenfunction of the Zoll surface.  

\subsection{Maximally degenerate Zoll Laplacians}

We now specialize to the maximally degenerate case. 
It follows from   Proposition \ref{EXIST}  that:

\begin{theo} \label{dq=0} \cite{Z1,Z2}  If $(S^2, g)$ is a Zoll surface with a maximally degenerate Laplacian, then for every closed geodesic,
 all of the coefficents
$p_j(I_1, I_2) $ and $\tilde{p}_j(I_1, I_2)$ equal zero.  
In particular,  $A_{-1} = 0$ and  $ \overline{d_{k; mn}^{\half}}  = 0$ for all $m, n$ with $m + n \leq 4$. Moreover,
$\bar{d}^q_{ m, n} = 0$ for all $q$ and $m, n$.

\end{theo}\

We now derive some explicit geometric consequences from Theorem \ref{dq=0}, in particular Theorem \ref{GBO},
by calculating $d^{\half}_{m, n}$. The diagonal calculations are essentially in \cite{Z2} except that we
need to clarify their solvability in the case of MDL Zoll surfaces. It appears that the vanishing of the off-diagonal
coefficients gives yet futher constraints on $g$ of the same form as in Theorem \ref{GBO} but with different
coefficients. This will become visible in the proof.



\section{Explicit formulae on Zoll surfaces}

We wish to evalute the coefficients $\overline{d}^{\half}_{ m, m}$ or equivalently  the complete symbol of $f_0$  in terms of integrals over
$\gamma$ of Fermi-Jacobi data.  At first, we will allow the dimension to be arbitrary; when it is time
to substitute in  metric expressions we will restrict to dimension 2.

\subsection{Abstract calculation}

To calculate $f_0(|z|^2)$, we need to unravel the diagonal part of the equation \begin{equation}
\label{DCALHALFINT}  \int_0^L
{\dcal}^{\half}_0|_0 (u,z,\bar z)du  .  \end{equation}
The integrand is given in  \eqref{D0HALF} and consists of two terms $ {\dcal}_0 + \frac{i}{2} [\dcal_{\half}, \tilde{Q}_{\half}]$.
We first consider the commutator term.
We note that $\dcal_{\half}$ is independent of $D_s$ so that $\dcal_{\half}|_0 = \dcal_{\half}.$
It follows that
$$ [\dcal_{\half}(s), \tilde{Q}_{\half}(s)] =  [\dcal_{\half}(s), \tilde{Q}_{\half}(0)] +  [\dcal_{\half}(s),\int_0^s \dcal_{\half}(t)dt]$$
so that the second term of \eqref{D0HALF} contributes to $f_0(I_1,\dots,I_n)$ the {\em diagonal part} of
\begin{equation} \label{COMMPART} \frac{i}{2}\{[\tilde{Q}_{\half}(0,e^{i\alpha}z,e^{-i\alpha}\bar{z}), \tilde{Q}_{\half}(0,z,\bar{z})] +
\frac{1}{2}\{[\int_0^L \dcal_{\half}(s)ds, \int_0^s \dcal_{\half}(t)dt]. \end{equation}
Here, the bracket $[,]$ denotes the  commutator of complete symbols in the sense of operator
(or complete symbol) composition. In the Zoll case, $\tilde{Q}_{\half}(0, \cdot, \cdot) = 0$ by Remark
\ref{REM1} and Remark \ref{REM2}. For emphasis:

\begin{rem}\label{COMMZERO} In the Zoll case, $q_{\half; m, n}(0) = 0$ and therefore this commutator term is zero. \end{rem}


\subsection{Explicit calculations of obstruction integrals}

To evaluate  the expressions $\dcal_{\half}$ and $\dcal_0$.
we conjugate back to the $\lcal$'s:
\begin{equation} \label{DCAL0a} \dcal_0 = \mu(\acal^*)\lcal_0 \mu(\acal^*)^{-1},\;\;\;\;\;\;\;\;
\dcal_{\half} = \mu(\acal^*) \lcal_{\half}\mu(\acal^{*})^{-1} \end{equation}
where as in \eqref{ACAL}, 
\begin{equation} \label{ACALb} \acal(s):=\left( \begin{array}{ll}  \Im\dot{Y} & \Re \dot{Y} \\  \Im Y &
\Re Y
\end{array}\right). \end{equation}

\begin{rem} As noted above, $Y$ is uniquely defined in the non-degenerate case but not in the Zoll
case. We return to this point in the next section. \end{rem}

We then conjugate the symbols.  By metaplectic covariance of the Weyl
calculus,
the conjugations change the complete Weyl symbols of the $\lcal$'s (in the $x$ variables) by the linear
symplectic transformation $\acal$, i.e by the substitutions
$$\begin{array}{l} x \rightarrow  [(\Re Y) x + (\Im Y)\xi] = \half [\bar {Y}\cdot z + Y \cdot
\bar{z}]
\\
\xi \rightarrow [\Re \dot{Y} x + (\Im \dot{Y})\xi] = \half  [\bar{\dot{Y}}\cdot z +
\dot{Y}\bar{z}]
\end{array}.
\leqno (7.8)$$

\subsection{Dimension 2}

  In dimension 2 we have (in scaled Fermi coordinates)
$$g^{oo}(s,y) =1+ C_1 \tau(s) y^2 +C_2 \tau_{\nu}(s)y^3 + \dots\;\;\;\;\;\;\;g^{11}=1 \;\;\;\;
J(s,u) = \sqrt{g_{oo}}= 1 + C_1'\tau(s)y^2 + \dots $$
 for some universal (metric independent) constants $C_j,C_j'$ which will change from line to line. They will end up in the
the coefficients of the normal form.

  Using the Taylor expansion of the metric coefficients one finds that 
\begin{lem} \label{LOCCALS} We have,
$$\left\{\begin{array}{l} \lcal_{\half} = C L^{-2}\tau_{\nu}(s) y^3,\;\;\;\;\; \\ \\ \lcal_0 = C_1 L^{-2}y^4 \tau_{\nu \nu}
+ C_2 L^{-1} y^2 \tau \partial_s + C_3 L^{-1} \tau_s y^2 - \partial_s^2+  C_4\tau y \partial_y  + C_5\tau. \end{array} \right.$$ All terms have weight -2.
\end{lem}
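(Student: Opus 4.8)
\textbf{Proof proposal for Lemma \ref{LOCCALS}.} The plan is to compute the terms $\lcal_{\half}$ and $\lcal_0$ directly from the asymptotic expansion \eqref{DELTAH} of the rescaled $\tfrac12$-density Laplacian $\Delta_h$, by substituting the dimension-$2$ Taylor expansions of the metric coefficients $g^{00}$, $J$ in Fermi normal coordinates and collecting the terms of the appropriate power of $h$. Recall from \eqref{DELTAH} that $\Delta_h \sim \sum_m h^{-2+m/2}\lcal_{2-m/2}$, with $\lcal_2 = 1$, $\lcal_{3/2}=0$, and $\lcal_1 = \lcal = D_s - \tfrac12(D_y^2 + \tau(s)y^2)$ already identified (so the displayed weights are the powers $h^{-2+m/2}$, which I will track throughout; the claim ``all terms have weight $-2$'' means that after the $h^{-1/2}$, $h^0$ prefactors are stripped, the surviving expressions are the coefficient operators, with the $L$-powers bookkeeping the scaling from $M_h$). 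Concretely, the two facts I need are: (i) the expansion $g^{00}(s,y) = 1 + C_1\tau(s)y^2 + C_2 \tau_\nu(s) y^3 + \cdots$ and $J(s,y) = \sqrt{g_{00}} = 1 + C_1'\tau(s)y^2 + \cdots$, which follow from the standard Jacobi-equation expansion of the metric in Fermi coordinates (cf. \cite{Z1}, around p.~573); and (ii) the explicit form of $\Delta_h$ in dimension $2$,
$$-\Delta_h = -h^{-2}g^{00}_{[h]} + 2ih^{-1}g^{00}_{[h]}\partial_s + ih^{-1}\Gamma^s_{[h]} + h^{-1}\partial_y^2 + h^{-1/2}\Gamma^y_{[h]}\partial_y + \sigma_{[h]},$$
where $[h]$ means replace $y \mapsto h^{1/2}y$ in the coefficients.

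First I would expand each coefficient in a Taylor series in $h^{1/2}$ using (i). For the $h^{-3/2}$-term: $g^{00}_{[h]} = 1 + C_1\tau h y^2 + C_2\tau_\nu h^{3/2}y^3 + \cdots$, so the $h^{-2}g^{00}_{[h]}$ piece contributes $h^{-2}$ (the $\lcal_2 = 1$ term), then $h^{-1}C_1\tau y^2$ (which feeds $\lcal_1$), then the first genuinely new term $h^{-1/2}C_2 \tau_\nu y^3$. Meanwhile $2ih^{-1}g^{00}_{[h]}\partial_s$ and $ih^{-1}\Gamma^s_{[h]}\partial_s$ have their next correction at order $h^{-1}\cdot h = h^0$ (since the leading $h^{1/2}$-correction to these coefficients vanishes: the metric has no odd-in-$y$ term at first order, the first correction being $O(y^2)$, i.e.\ $O(h)$ after dilation), and $h^{-1}\partial_y^2$ is a constant coefficient so contributes nothing new at half-integer order; $h^{-1/2}\Gamma^y_{[h]}\partial_y$ has leading coefficient $\Gamma^y = O(y)$, i.e.\ $O(h^{1/2})$ after dilation, so this term starts at $h^0$. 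Hence the only contribution at order $h^{-1/2}$ is from $g^{00}_{[h]}$, giving $\lcal_{\half} = C L^{-2}\tau_\nu(s) y^3$ after incorporating the $L$-scaling bookkeeping from $M_h$ (the $L^{-2}$ being inherited from how $M_h^* D_s M_h = (hL)^{-1} + D_s$ distributes the $h^{-1}$, $h^{-2}$ powers across the expansion). Then for the $h^0$-term I collect: the $y^4$-coefficient of $g^{00}_{[h]}$ (which produces $C_1 L^{-2} y^4 \tau_{\nu\nu}$, noting $\partial_\nu^2\tau$ is the quadratic-in-$y$ curvature correction), the $y^2$-order corrections to the $\partial_s$-coefficients from $g^{00}_{[h]}$ and $\Gamma^s_{[h]} = \Gamma^s_{ss}$ (producing $C_2 L^{-1} y^2\tau\partial_s$ and $C_3 L^{-1}\tau_s y^2$), the second-order operator $-\partial_s^2$ coming from $g^{00}\partial_s^2$ expanded to the right order, the $\Gamma^y_{[h]}\partial_y$ term (whose leading coefficient $\propto \tau(s) y$ gives $C_4 \tau y\partial_y$), and the subprincipal symbol $\sigma_{[h]}$ evaluated at $y=0$ (giving $C_5\tau$, since the subprincipal term of the $\tfrac12$-density Laplacian along the geodesic is a multiple of the scalar curvature, i.e.\ $\tau$ in dimension $2$). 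Finally I would check that every one of these operators, when stripped of its $h$-prefactor and written in the weight convention of \cite{Z2}, has homogeneous weight $-2$ (counting $\partial_s$ as weight $0$, $\partial_y$ and $y$ each as weight... — more precisely, counting according to the bidegree convention $O_m\Psi^r$: each displayed term lands in $O_4\Psi^0$, i.e.\ symbol-order $0$ and vanishing to order $4$ — no, order $2$ — at the origin; I would state the precise weight convention from \cite{Z2} and verify term by term).

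The main obstacle will be pinning down the precise universal constants $C, C_1,\dots,C_5$ (and more importantly, verifying that they are \emph{nonzero}, which matters for the application in Theorem \ref{GBO}), since this requires carefully tracking (a) the third- and fourth-order Taylor coefficients of $g^{00}$ in Fermi coordinates in terms of $\tau$, $\tau_\nu$, $\tau_{\nu\nu}$ via the Jacobi equation $y'' + \tau y = 0$ and the relation $g^{00} = J^{-2}$ (or $= j^2$ depending on convention — I would fix this at the outset), (b) the $\tfrac12$-density conjugation $j^{1/2}\Delta j^{-1/2}$, whose cross terms $j^{-1/2}\partial j^{1/2}$ contribute to the lower-order pieces, and (c) the bookkeeping of the $L$-powers and the $h$-powers under the two scalings $T_h$, $M_h$ of \eqref{SCALING}. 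None of these is conceptually hard, but the combination is error-prone; this is precisely the ``rather detailed and messy'' computation the introduction warns about. Since the lemma only asserts the \emph{form} of the two operators (which monomials in $y$, $\tau$ and their derivatives appear, together with which differential operators $\partial_s$, $\partial_s^2$, $\partial_y$ multiply them) and not the values of the constants, the proof reduces to the bookkeeping claim that no other monomials can appear at orders $h^{-1/2}$ and $h^0$ — and that follows from a parity/degree count: at order $h^{k/2}$ the dilation $y\mapsto h^{1/2}y$ forces the total $y$-degree of each coefficient contribution to match the deficit in $h$-power, so at $h^{-1/2}$ only the degree-$3$ coefficient of $g^{00}$ survives, and at $h^0$ only degree-$4$ coefficients of $g^{00}$, degree-$2$ coefficients of the $\partial_s$-terms, degree-$1$ of the $\partial_y$-term, and degree-$0$ of $\sigma$. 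I would present the lemma's proof as this parity count plus a one-line-per-term identification, referring to \cite{Z2} for the constants.
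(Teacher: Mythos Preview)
Your approach is correct and is exactly what the paper does: the paper gives no proof beyond the sentence ``Using the Taylor expansion of the metric coefficients one finds that'' immediately preceding the lemma, and your parity/degree count plus term-by-term identification is precisely the computation that sentence is summarizing. Your uncertainty about the ``weight $-2$'' claim is understandable since the paper does not spell out the convention here either; it refers to \cite{Z2} for the bigraded filtration $O_m\Psi^r$, and you are right to defer the precise constants and weight verification to that reference.
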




We now complete the calculation of the diagonal terms in \eqref{DCALHALFINT}. We rewrite the commutators in terms of the coefficients  \eqref{HALFS}  at $s = 0$ and the Poisson bracket
constants in \S \ref{C}.

As noted in Remark \ref{REM1},Remark \ref{REM2} and Remark \ref{COMMZERO},  $\tilde{Q}_{\half}(0) = 0$ and therefore its commutators
make no contribution in the Zoll case. Thus we only need to calculate the diagonal part of
\begin{equation} \label{DCALCOMM}  \frac{1}{2}\{[\int_0^L \dcal_{\half}(s)ds, \int_0^s \dcal_{\half}(t)dt]  + \dcal_0. \end{equation}
The diagonal part is the part  which is a function only of $|z|^2$. The off-diagonal terms have not previously
been studied because they can be eliminated from the normal form in the non-degenerate case. Their vanishing
in the MDL Zoll case is additional information on the metric.

We now prove the following Lemma and also evaluate the coefficients:

\begin{lem} \label{DCALCOMMLEM}

The diagonal part of \eqref{DCALCOMM}  term has the form
$$ \begin{array}{l} \mbox{Diagonal Part} \left( \frac{1}{2}\int_0^L\int_0^s[\tilde{D}_{\half}(s,z,\bar{z}),
\tilde{D}_{\half}(t,z,\bar{z}]dsdt \right) = A_4 |z|^4 + A_0, \end{array} $$
with 
$$\begin{array}{l} A_4 = [\int_0^L\int_0^s  C_{1;3030}
d_{\half;30}(s)d_{\half; 03}(t) + C_{1;2112}d_{\half;21}(s)d_{\half; 12}(t)
\\ \\+C_{1;1221}
 d_{\half;12}(s)d_{\half;21}(t) +
C_{1;0330}d_{\half;30}(s)d_{\half; 03}(t) dsdt]  \end{array} $$
and with 
$$\begin{array}{l} 
A_0 =  [\int_0^L\int_0^s [C_{3;3030} d_{\half; 30}(s)d_{\half; 03}(t) \\ \\  +
C_{3;2112}d_{\half; 21}(s)d_{\half; 12}(t) +C_{3;1221}
d_{\half;12}(s)d_{\half;21}(t) + C_{3;0330}d_{03}(s)d_{\half;30}(t) ]dsdt].  \end{array}$$ The coefficients are universal. 
\end{lem}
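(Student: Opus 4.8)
\textbf{Proof proposal for Lemma \ref{DCALCOMMLEM}.}

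The plan is to compute the diagonal part of the double commutator integral in \eqref{DCALCOMM} directly from the explicit expression $\lcal_{\half} = C L^{-2} \tau_\nu(s) y^3$ of Lemma \ref{LOCCALS}, transported through the metaplectic conjugation and written in the complex Weyl variables $z, \bar z$. First I would record that, by \eqref{DCAL0a} and the substitution (7.8), the Weyl symbol $\tilde D_{\half}(s, z, \bar z)$ is obtained from $\lcal_{\half}$ by replacing $y$ with $\half(\bar Y z + Y \bar z)$; since $\lcal_{\half}$ is a pure cubic $\propto \tau_\nu(s) y^3$, its symbol is a degree-$3$ homogeneous polynomial in $(z, \bar z)$, so in the notation of \eqref{HALFS} only the coefficients $d_{\half; mn}(s)$ with $m + n = 3$ are nonzero, namely $d_{\half;30}, d_{\half;21}, d_{\half;12}, d_{\half;03}$, and each is a universal constant times $\tau_\nu(s)$ times a monomial of degree $3$ in $Y, \bar Y$ (explicitly $d_{\half;30} \propto \tau_\nu \bar Y^3$, $d_{\half;21} \propto \tau_\nu \bar Y^2 Y$, etc.). Since $\tau_\nu$ is real, one has the reality relation $\overline{d_{\half;mn}(s)} = d_{\half;nm}(s)$.

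Next I would expand the commutator of complete Weyl symbols using the odd transvectant expansion of \S \ref{C}: $a \# b - b \# a \sim \frac{1}{i} P_1(a,b) + \frac{1}{i^3 3!} P_3(a,b)$, with no higher terms since $\tilde D_{\half}$ is cubic and $P_5$ of two cubics vanishes. Writing $\tilde D_{\half}(s, \cdot) = \sum_{m+n=3} d_{\half;mn}(s) z^m \bar z^n$ and using the formula $P_1(z^m \bar z^n, z^\mu \bar z^\nu) = C_{1;mn\mu\nu} z^{m+\mu-1} \bar z^{n+\nu-1}$, the $P_1$ part of the commutator is a sum of degree-$4$ monomials $z^{m+\mu-1}\bar z^{n+\nu-1}$ with $m+n = \mu+\nu = 3$; this monomial is a function of $|z|^2$ alone precisely when $m + \mu - 1 = n + \nu - 1$, i.e. when $(\mu, \nu) = (n, m)$. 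So the diagonal part of the $P_1$ contribution picks out exactly the four index pairs $(30)(03)$, $(21)(12)$, $(12)(21)$, $(03)(30)$, giving $|z|^4$ times a universal combination of products $d_{\half;mn}(s) d_{\half;nm}(t)$ — this yields the stated $A_4$. Similarly, $P_3(z^m\bar z^n, z^\mu \bar z^\nu) = C_{(m,n),(\mu,\nu)} z^{m+\mu-3}\bar z^{n+\nu-3}$ produces degree-$0$ terms; these are automatically functions of $|z|^2$ (indeed constants, so trivially diagonal), and collecting the same four index pairs gives the stated $A_0$. Writing out $\frac{i}{2}$ times the $s,t$-iterated integral of these and absorbing the $i$-powers and the $\frac{1}{3!}$ into the universal constants (relabelled $C_{1;\,\cdot}$ and $C_{3;\,\cdot}$) gives the two displayed formulas, with universality of the constants inherited from the universality of the transvectant coefficients in \S \ref{C} and of the Taylor constants in Lemma \ref{LOCCALS}.

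Two points need care. First, one must check that the \emph{other} diagonal contribution to $\tilde{\dcal}^{\half}_0|_0$, namely the piece coming from $\tilde Q_{\half}(0, \cdot)$ in \eqref{COMMPART}, does not interfere: but by Remark \ref{REM1}, Remark \ref{REM2} and Remark \ref{COMMZERO}, in the Zoll case $q_{\half;mn}(0) = 0$, so that commutator term vanishes identically, which is why \eqref{DCALCOMM} contains only the $[\int_0^L \dcal_{\half}, \int_0^s \dcal_{\half}]$ term plus $\dcal_0$; the Lemma isolates the commutator half, and $\dcal_0$ is handled separately (it contributes the $a_j |\dot Y|^4$, $\tau |\dot Y Y|^2$, $\tau^2 |Y|^4$, $\tau_{\nu\nu}|Y|^4$, $\tau$ terms of Theorem \ref{GBO}). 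Second, one must be careful about the ordering of the iterated integral: $[\dcal_{\half}(s), \int_0^s \dcal_{\half}(t)\,dt]$ is genuinely a \emph{non}-symmetric double integral over $0 \le t \le s \le L$, and the antisymmetry of the commutator does \emph{not} let one symmetrize it away — this is precisely the origin of the $\Im$ in the second line of \eqref{YINTS}, since $\int_0^L\int_0^s f(s)\overline{f(t)}\,dt\,ds$ has an imaginary part controlled by $f(s)\bar f(t) - \bar f(s) f(t)$. The main obstacle is therefore bookkeeping: correctly tracking which of the degree-$3 \times$ degree-$3$ transvectant pairings land on the diagonal, and correctly propagating the factors of $i$ and $\half$ through $P_1$ and $P_3$ so that the final universal constants $C_{1;mn}$, $C_{3;mn}$ are the ones appearing in Theorem \ref{GBO}; none of the individual steps is deep, but an error in any index or sign propagates into the claimed normal-form coefficients.
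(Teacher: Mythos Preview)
Your proposal is correct and follows essentially the same approach as the paper's proof: both identify $\dcal_{\half}$ as the homogeneous cubic $C\tau_\nu(s)(\bar Y z + Y\bar z)^3$, expand the commutator via the odd transvectants $P_1$ and $P_3$ of \S\ref{C}, and observe that the diagonal part of $P_1$ selects exactly the four ``conjugate'' index pairs $(m,n)\leftrightarrow(n,m)$ giving $|z|^4$, while $P_3$ produces the constant $A_0$, with no $|z|^2$ term possible since degree-$4$ and degree-$0$ exhaust the outputs. Your additional remarks on the reality relation $\overline{d_{\half;mn}}=d_{\half;nm}$ and on the non-symmetrizability of the iterated integral are useful clarifications that the paper leaves implicit; conversely, for $A_0$ you might note explicitly that the formula $P_3(z^m\bar z^n, z^\mu\bar z^\nu)\propto z^{m+\mu-3}\bar z^{n+\nu-3}$ forces $m+\mu=n+\nu=3$ (else a negative exponent, hence zero), which is precisely why only the same four pairs survive---your phrase ``collecting the same four index pairs'' is correct but this is the reason.
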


Making the linear symplectic substitutions above we first get
\begin{equation} \label{DCALHALFa} \dcal_{\half}(s,z,\bar{z}) = C \tau_{\nu}(s)
 ( [ \bar {Y}\cdot z + Y \cdot \bar{z}])^{3}. \end{equation}
We note that $\dcal_{\half}(s)$  is a homogeneous polynomial of degree 3. Obviously,

\begin{lem} \label{dhalf} For $m + n = 3, $ the coefficient of $z^m \bar{z}^n$ in  $\dcal_{\half}(s)$ is
$$d_{\half; mn}(s) =  C_{mn;3} \tau_{\nu} [\bar
{Y}^m\cdot Y^n](s).$$
\end{lem}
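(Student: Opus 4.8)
The statement to prove is Lemma \ref{dhalf}: that the coefficient of $z^m\bar z^n$ in $\dcal_{\half}(s)$ (for $m+n=3$) equals $C_{mn;3}\,\tau_{\nu}(s)\,[\bar Y^m Y^n](s)$ for universal constants $C_{mn;3}$. The strategy is a direct unwinding of the definitions already assembled in the excerpt. First I would start from Lemma \ref{LOCCALS}, which gives $\lcal_{\half} = C L^{-2}\tau_{\nu}(s)\,y^3$, a purely multiplicative (zeroth-order) operator whose Weyl symbol is simply $C L^{-2}\tau_{\nu}(s)\,x^3$ (no ordering ambiguity since it is a polynomial in $x$ alone). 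Then I would apply the metaplectic conjugation $\dcal_{\half} = \mu(\acal^*)\lcal_{\half}\mu(\acal^*)^{-1}$ from \eqref{DCAL0a}. By metaplectic covariance of the Weyl calculus, conjugation replaces the symbol's variables by the linear symplectic substitution recorded in $(7.8)$: in particular $x \mapsto \tfrac12[\bar Y\cdot z + Y\cdot\bar z]$. Substituting this into $x^3$ immediately yields \eqref{DCALHALFa}, namely $\dcal_{\half}(s,z,\bar z) = C\,\tau_{\nu}(s)\,\big([\bar Y\cdot z + Y\cdot\bar z]\big)^3$ (absorbing the $L^{-2}$ and the factor of $\tfrac18$ into the constant $C$).

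\textbf{Extracting the coefficient.} Once $\dcal_{\half}$ is written as $C\tau_{\nu}(s)(\bar Y z + Y\bar z)^3$, I would simply expand the cube by the binomial theorem. The term $z^m\bar z^n$ with $m+n=3$ arises with combinatorial coefficient $\binom{3}{m}$ and carries the factor $\bar Y^m Y^n$, so its coefficient is $C\binom{3}{m}\,\tau_{\nu}(s)\,\bar Y^m(s)Y^n(s)$. Setting $C_{mn;3} := C\binom{3}{m}$ — a universal constant independent of the metric, depending only on the pair $(m,n)$ — gives exactly the claimed form $d_{\half;mn}(s) = C_{mn;3}\,\tau_{\nu}\,[\bar Y^m Y^n](s)$. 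In the surface case ($n=1$ normal direction) there is a single index, so $Y$ and $\dot Y$ are scalars and the expansion is literally $(\bar Y z + Y\bar z)^3 = \sum_{m=0}^{3}\binom{3}{m}\bar Y^m Y^{3-m} z^m\bar z^{3-m}$.

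\textbf{Main obstacle.} The one point requiring care — and the only place where anything beyond bookkeeping enters — is the justification that metaplectic conjugation acts on the \emph{complete} Weyl symbol by exactly the linear substitution $(7.8)$ with no lower-order correction terms. For a polynomial symbol that is homogeneous in $x$ of degree $3$ and has $s$-dependent (but $\xi$-independent) coefficients, this is the standard metaplectic covariance property of the Weyl quantization (as in the reference \cite{Fo}): $\mu(g)^* \,\mathrm{Op}^W(a)\,\mu(g) = \mathrm{Op}^W(a\circ g)$ for $g$ linear symplectic, applied fiberwise in $s$ via the direct integral $\mu(\acal) = \int^{\oplus}\mu(\acal(s))\,ds$. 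Since $\lcal_{\half}$ has no $D_s$ dependence, $\lcal_{\half}|_0 = \lcal_{\half}$ and the conjugation is genuinely pointwise in $s$, so no commutator terms involving $\partial_s$ appear. After confirming this, the rest is the elementary binomial expansion above, and the universal constants $C_{mn;3}$ are read off directly.
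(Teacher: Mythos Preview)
Your proposal is correct and follows essentially the same approach as the paper. The paper derives \eqref{DCALHALFa} from Lemma~\ref{LOCCALS} and the metaplectic substitution $(7.8)$ exactly as you describe, and then simply writes ``Obviously'' before stating Lemma~\ref{dhalf}; your binomial expansion is precisely what that ``Obviously'' stands for, and your extra care about why no lower-order terms appear under metaplectic conjugation is a welcome elaboration of a point the paper takes as standard.
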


Let $[F, G] = F \# G - G \# F$. Then the commutator in  \eqref{DCALCOMM}  equals

\begin{equation} \label{DCALHALF} \begin{array}{l} 
 \sum_{m + n = 3, p + q = 3}  C_{m,n,p,q}  
\left(\int_0^L \int_0^s d_{\half; mn}(s) d_{\half; pq}(t)  ds dt \right) [z^m \bar{z}^n, 
z^p \bar{z}^q]\\ \\ = \sum_{m + n = 3, p + q = 3}  C'_{m,n,p,q}\{[\int_0^L \tau_{\nu}(s)
 [\bar
{Y}^m\cdot Y^n](s) \int_0^s \tau_{\nu}(t)
 [\bar
{Y}^p\cdot Y^q](t)  dt \}  [z^m \bar{z}^n, 
z^p \bar{z}^q]. \end{array} \end{equation}
The  commutator $  [z^m \bar{z}^n, 
z^p \bar{z}^q]$ is described in \S \ref{C} and is a sum of a Poisson bracket and a third
transvectant.



\subsubsection{Diagonal terms  of the commutator}

 We note that
$\{z^m \bar{z}^n, z^p \bar{z}^q \} = C z^{m + p -1} \bar{z}^{n + q -1}$ and since $\dcal_{\half}$ is homogeneous
of degree 3, to obtain a term of type $|z|^{2k}$ ($k = 0, 1, 2$) we need $m + p  = k +1 = n + q , m + n = 3 = p + q.$ 
The only non-zero Poisson bracket  occurs when  $k = 2$ and then $m + p = 3 = n + q$, so $p = n, q = m$. This produces terms of the form $[z^m \bar{z}^n, z^n \bar{z}^m]$ times
$$ \{[\int_0^L \tau_{\nu}(s)
 [\bar
{Y}^m\cdot Y^n](s) \int_0^s \tau_{\nu}(t)
 [\bar
{Y}^n\cdot Y^m](t)  dt \}.  $$
If we interchange $m \to n, n \to m$, the diagonal terms of the commutator $[z^m \bar{z}^n, z^n \bar{z}^m]$ changes
sign and the above integral changes to its complex conjugate. Hence, the diagonal part of the commutator
produces a sum of terms, 
$$\sum_{m + n = 3} C_{m, n} \Im  \{[\int_0^L \tau_{\nu}(s)
 [\bar
{Y}^m\cdot Y^n](s) \int_0^s \tau_{\nu}(t)
 [\bar
{Y}^n\cdot Y^m](t)  dt \}.  $$

But additionally,  the symbol of the commutator involves the third transvectant $P_3$
of \S \ref{C}, and the $P_3$ of any two monomials of degree 3 is a constant in $z$.


We 
observe that there is no term of order $|z|^2$ since the Poisson bracket of two 
homogeneous polynomials of degree 3 has degree 4 and the $P_3$-transvectant has
degree zero. This proves Lemma \ref{DCALCOMMLEM} and also: 



\begin{lem} \label{COMM4} On a Zoll surface, the coefficient of  $|z|^4$  or $1$   in the commutator term 
\eqref{COMMPART} is a sum of universal constants times
$$\begin{array}{l} \frac{1}{L}\{\int_0^L \tau_{\nu}(s)\bar {Y}^mY^n(s)[\int_0^s\tau_{\nu}(t)\bar
{Y}^nY^m](t)dt] ds - \int_o^L \tau_{\nu}(s)\bar {Y}^nY^m(s)[\int_0^s\tau_{\nu}(t)\bar
{Y}^mY^n(t)dt] ds\} \\ \\
= 2 \frac{1}{L} \Im  \int_0^L \tau_{\nu}(s)\bar {Y}^mY^n(s)[\int_0^s\tau_{\nu}(t)\bar
{Y}^nY^m](t)dt] ds. \end{array}$$

\end{lem}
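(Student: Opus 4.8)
\textbf{Proof strategy for Lemma \ref{COMM4}.} The plan is to extract the coefficient of $|z|^4$ (and of $1$) from the commutator term \eqref{COMMPART}, using that in the Zoll case the boundary contribution $\tilde{Q}_{\half}(0,\cdot,\cdot)$ vanishes (Remark \ref{REM1}, Remark \ref{REM2}, Remark \ref{COMMZERO}), so that only the second piece of \eqref{COMMPART}, namely $\frac{1}{4}\{[\int_0^L \dcal_{\half}(s)\,ds,\ \int_0^s \dcal_{\half}(t)\,dt]\}$, survives. First I would insert the monomial expansion of $\dcal_{\half}$ from Lemma \ref{dhalf}, writing $\dcal_{\half}(s,z,\bar z) = \sum_{m+n=3} d_{\half;mn}(s)\,z^m\bar z^n$ with $d_{\half;mn}(s) = C_{mn;3}\,\tau_{\nu}(s)\,[\bar Y^m Y^n](s)$, so that the commutator of complete symbols becomes the double sum in \eqref{DCALHALF}, with the geometric content packaged into the iterated integrals $\int_0^L \tau_{\nu}(s)[\bar Y^m Y^n](s)\big(\int_0^s \tau_{\nu}(t)[\bar Y^p Y^q](t)\,dt\big)\,ds$ and the algebraic content in the symbol commutators $[z^m\bar z^n,\ z^p\bar z^q]$.

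Next I would run the degree bookkeeping already carried out in the subsection ``Diagonal terms of the commutator.'' Since each factor is homogeneous of degree $3$, the Poisson bracket $P_1$ of the two symbols is homogeneous of degree $4$, so it can only contribute to the $|z|^4$ term, and it does so exactly when $p=n,\ q=m$; the third transvectant $P_3$ lowers the bidegree by $(3,3)$, so it produces a term of degree $0$, i.e.\ a constant (the coefficient of $1$). There is no term of order $|z|^2$. For the $|z|^4$ part: $[z^m\bar z^n,\ z^n\bar z^m]$ is antisymmetric under swapping $m\leftrightarrow n$, while the iterated integral $\int_0^L \tau_{\nu}(s)[\bar Y^m Y^n](s)\int_0^s\tau_{\nu}(t)[\bar Y^n Y^m](t)\,dt\,ds$ goes to its complex conjugate under the same swap (because $\overline{[\bar Y^m Y^n]} = [\bar Y^n Y^m]$ and reversing the roles of $s$ and $t$ reverses the order of integration). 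Pairing the two antisymmetries and summing over the ordered pair gives, for each unordered $\{m,n\}$ with $m+n=3$, a universal constant times the difference
$$\frac{1}{L}\Big\{\int_0^L \tau_{\nu}(s)\bar Y^m Y^n(s)\Big[\int_0^s\tau_{\nu}(t)\bar Y^n Y^m(t)\,dt\Big]ds - \int_0^L \tau_{\nu}(s)\bar Y^n Y^m(s)\Big[\int_0^s\tau_{\nu}(t)\bar Y^m Y^n(t)\,dt\Big]ds\Big\},$$
which is precisely $2\,\frac{1}{L}\,\Im\int_0^L \tau_{\nu}(s)\bar Y^m Y^n(s)\big[\int_0^s\tau_{\nu}(t)\bar Y^n Y^m(t)\,dt\big]ds$, the claimed form. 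The coefficient of $1$ is handled identically using the $P_3$-transvectant constants from \S\ref{C} in place of the $P_1$-constants; it too reduces to an imaginary part of the same iterated integrals (now without surviving $z$-dependence).

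The main obstacle I anticipate is \emph{not} conceptual but bookkeeping: one must track the Weyl-calculus commutator constants $C_{m,n,p,q}$ through the substitution $z^m\bar z^n$, verify that the symmetric ($P_3$) and antisymmetric ($P_1$) pieces sort cleanly onto the $|z|^0$ and $|z|^4$ levels respectively, and confirm the sign in the conjugation identity $\overline{\int_0^L f(s)\int_0^s g(t)\,dt\,ds} = \int_0^L \bar f(s)\int_0^s \bar g(t)\,dt\,ds$ combined with the Fubini reflection $s\leftrightarrow t$ that turns one iterated integral into (the conjugate of) the other. Once those sign conventions are pinned down, the statement follows immediately from the degree count, and the universality of the constants is inherited from the universality of the Weyl-symbol transvectant coefficients in \S\ref{C} and of the metric Taylor coefficients in Lemma \ref{LOCCALS} and Lemma \ref{dhalf}.
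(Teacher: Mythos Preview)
Your proposal is correct and follows essentially the same approach as the paper: reduce to the second commutator in \eqref{COMMPART} via Remark~\ref{COMMZERO}, insert the monomial expansion of $\dcal_{\half}$ from Lemma~\ref{dhalf}, use the degree count ($P_1$ lands in degree~$4$, $P_3$ in degree~$0$, nothing in degree~$2$), and then exploit the antisymmetry of $[z^m\bar z^n, z^n\bar z^m]$ under $m\leftrightarrow n$ together with the fact that the iterated integral goes to its complex conjugate under the same swap. One small remark: the ``Fubini reflection $s\leftrightarrow t$'' you anticipate is not actually needed---since $\tau_\nu$ is real and $\overline{\bar Y^m Y^n}=\bar Y^n Y^m$, conjugating the iterated integral directly yields the $m\leftrightarrow n$-swapped version without any change in the order of integration.
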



\begin{rem} \label{OD1} There are many `off-diagonal' terms in the commutator
corresponding to $[z^m \bar{z}^n, z^p \bar{z}^q]$ which are not a power of $|z|^2$. These
obviously produce terms of the form
$$ \{[\int_0^L \tau_{\nu}(s)
 [\bar
{Y}^m\cdot Y^n](s) \int_0^s \tau_{\nu}(t)
 [\bar
{Y}^q\cdot Y^p](t)  dt \}.  $$
On a Zoll surface the complete sum of the off-diagonal terms must vanish. 
\end{rem}

\subsubsection{Diagonal terms of   $\dcal_0|_0$}

To  complete the calculation of \eqref{DCALHALFINT}, we need to find the symbol of the $D_s$-weight 0 part   $\dcal_0|_0$  of the remaining term of $\dcal_0^{\half}$  in \eqref{DCALCOMM}.  We 
make the same linear substitution and eliminate any $D_s$ appearing all the way to the right.
 We also invert the relation
$$\mu(\acal^*)^{-1} D_s \mu(\acal^*) = D_s - \half( \partial_x^2 +  \tau x^2)$$
to get
$$\mu(\acal^*) D_s \mu(\acal^*)^{-1} = D_s - \half\mu(\acal^*)(\partial_x^2 +  \tau x^2)
\mu(\acal^*)^{-1}$$
and transform the complete symbol of quadratic term by the symplectic substitution.  The result is
that
\begin{lem} \label{dcal0}  $\dcal_0|_0 (s,z,\bar{z})$ equals
$$ \begin{array}{ll} (i) & C_1 \tau_{\nu \nu}  [\bar {Y} z + Y  \bar{z}]^4  + C_3 \tau_s [\bar {Y} z + Y  \bar{z}]^2 \\ \\
(ii) & +C_2 \tau [\bar {Y} z + Y  \bar{z}]^2 \#
( [\bar{\dot{Y}} z + \dot{Y}\bar{z}]^2 + \tau [\bar {Y} z + Y \bar{z}]^2))
 \\ \\
(iii) &  +\{ [\bar{\dot{Y}} z + \dot{Y}\bar{z}]^2 - \tau  [\bar {Y}\cdot z + Y \bar{z}]^2\}
\#\{ [\bar{\dot{Y}} z + \dot{Y}\bar{z}]^2 - \tau [\bar {Y} z + Y  \bar{z}]^2\} \\ \\
(iv) &   -2 \partial_s( [\bar{\dot{Y}} z + \dot{Y}\bar{z}]^2 -\tau [\bar {Y} z + Y
\bar{z}]^2) \\ \\
(v) & + C_4 \tau (\bar {Y} z + Y  \bar{z})\#(\bar{\dot{Y}} z + \dot{Y}\bar{z}) +C_5 \tau. \end{array}$$
\end{lem}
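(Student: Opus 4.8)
The plan is to conjugate the expression for $\lcal_0$ given in Lemma~\ref{LOCCALS} term by term by the moving metaplectic transformation $\mu(\acal^*)$ of \eqref{DCAL0a}, using metaplectic covariance of the Weyl calculus, and then to read off the part of $D_s$-weight zero. By covariance, for each fixed $s$ the conjugation $A\mapsto\mu(\acal(s)^*)A\mu(\acal(s)^*)^{-1}$ replaces the complete Weyl symbol of $A$ in the transverse variables by its transform under the linear symplectic matrix $\acal(s)$, which in the complex coordinates $z=x+i\xi$ is exactly the substitution (7.8), $x\mapsto\tfrac12(\bar Y z+Y\bar z)$, $\xi\mapsto\tfrac12(\bar{\dot Y}z+\dot Y\bar z)$. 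Because conjugation also preserves operator composition, products of operators are sent to $\#$-products of the substituted symbols, which is why items (ii)--(v) are written with $\#$ rather than with pointwise products.

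First I would dispose of the terms of $\lcal_0$ that do not involve $\partial_s$, namely $C_1L^{-2}\tau_{\nu\nu}y^4$, $C_3L^{-1}\tau_s y^2$, $C_4\tau\,y\partial_y$ and $C_5\tau$. Applying (7.8) directly to these produces the $\tau_{\nu\nu}$- and $\tau_s$-terms of item (i), the term $C_4\tau(\bar Y z+Y\bar z)\#(\bar{\dot Y}z+\dot Y\bar z)$ in item (v) (the symbol of the first-order operator $y\partial_y$ being transported as a single object, whence the $\#$), and $C_5\tau$ in item (v). All of these are already of $D_s$-weight $0$, so the restriction $|_0$ leaves them unchanged.

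The substantive step is the conjugation of the two terms of $\lcal_0$ carrying a $\partial_s$, namely $-\partial_s^2$ and $C_2L^{-1}\tau y^2\partial_s$. Here I would use Proposition~\ref{MUL} in the form $\mu(\acal^*)D_s\mu(\acal^*)^{-1}=D_s-\tfrac12 Q$, where $Q$ is a transverse quadratic operator (a version of $D_y^2+\tau y^2$) whose Weyl symbol, after the substitution (7.8), is a universal-constant combination of $[\bar{\dot Y}z+\dot Y\bar z]^2$ and $\tau[\bar Y z+Y\bar z]^2$ --- precisely the quadratic forms appearing in items (ii)--(iv). Thus $-\partial_s^2=D_s^2$ is conjugated to $(D_s-\tfrac12 Q)^2=D_s^2-QD_s-\tfrac1{2i}\partial_s Q+\tfrac14 Q^2$ once the single $D_s$'s are pushed to the right using $[D_s,A(s)]=\tfrac1i\partial_s A(s)$; discarding the terms of $D_s$-weight $1$ and $2$ leaves the weight-$0$ survivors $\tfrac14\,Q\#Q$ and $-\tfrac1{2i}\partial_s Q$, which up to the universal constants are items (iii) and (iv). The term $\tau y^2\partial_s$ conjugates to $\bigl(\text{symbol of }\tau y^2\bigr)\#\bigl(iD_s-\tfrac i2 Q\bigr)$; dropping the $D_s$-linear remainder leaves $\propto\tau[\bar Y z+Y\bar z]^2\#(\text{symbol of }Q)$, which is item (ii). Assembling items (i)--(v) completes the proof.

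The step I expect to be the main obstacle is this last piece of $D_s$-bookkeeping: metaplectic conjugation does not preserve $D_s$-weight, so the conjugates of $-\partial_s^2$ and $\tau y^2\partial_s$ simultaneously generate terms of weights $0$, $1$ and $2$, and one must check that the only weight-$0$ survivors are the commutator term (iv) together with the $Q\#Q$ and $\tau y^2\#Q$ terms (iii) and (ii), with nothing lost when the $D_s$'s are commuted to the right. A secondary, purely clerical matter is to keep the normal second derivative $\tau_{\nu\nu}$, the tangential derivative $\tau_s$, and the powers of $L$ correctly separated, since the universal constants $C_j$ occurring here propagate into the coefficients of the normal form.
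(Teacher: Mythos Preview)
Your proposal is correct and follows precisely the route the paper itself takes: conjugate $\lcal_0$ term by term using the metaplectic covariance substitution (7.8), invert Proposition~\ref{MUL} to handle the $\partial_s$-carrying terms, push all $D_s$'s to the right, and retain only the $D_s$-weight zero survivors. Your identification of which summands of $\lcal_0$ produce items (i)--(v), and your expansion $(D_s-\tfrac12 Q)^2=D_s^2-QD_s-\tfrac{1}{2i}\partial_sQ+\tfrac14 Q\# Q$ isolating (iii) and (iv), is in fact more explicit than the paper's own brief discussion preceding the lemma.
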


Our concern is with the diagonal part of the complete symbol, that is, with the terms involving
$|z|^4, |z|^2, |z|^0$, and more precisely with their integrals over $\gamma$. The
$\#$ product  produces a finite sum  terms of decreasing degree corresponding to the
higher transvectants. 

The
 diagonal part of the first term of  (i)   is homogeneous  of degree $|z|^4$ and its average over $\gamma$ equals
$$ (Const.) |z|^4 \cdot \frac{1}{L} \int_0^L \tau_{\nu \nu} |Y|^4 ds. $$ It is the "$d_j$'' 
term in the formula of Theorem \ref{GBO} and is clearly only a coefficient of $|z|^4$; this
explains why the term does not appear as a coefficient of $|z|^0$.  The integral of
the diagonal part of  the second
term of (i) vanishes, 
$$\;\;\;\;\;\;\;\;\int_0^L \tau_s |Y|^2=0,$$
as can be seen from the  Jacobi equation, which implies:
$$ [\bar{Y} (Y')'' + \tau_s|Y|^2 + \tau Y'\bar{Y}]=0;$$
integrating over $\gamma$ and integrating the first term by parts twice kills the outer terms and
hence the inner one.

Terms (ii) and (iii) are similar and possibly combine. Term (iii) is a $\#$-square, as is one of
the two terms of (ii). 

The diagonal part of  the $\#$ product in (ii)  contributes only  $P_0$ and $P_2$ terms, of degrees $|z|^4$ and $|z|^0$
respectively. The $|z|^4$ coefficient is that of 
$$\tau^2 [\bar {Y} z + Y  \bar{z}]^4 + \tau [\bar {Y} z + Y  \bar{z}]^2
 [\bar{\dot{Y}} z + \dot{Y}\bar{z}]^2 = (\tau^2 |Y|^4  + \tau |Y \dot{Y}|^2)|z|^4 + \cdots, $$
where the $\cdots$ terms are homogeneous of degree 4 but do not contain a term of type $|z|^4$. 
The integral over $\gamma$ of the possible  $P_1$-term of type $|z|^2$  vanishes: it is a multiple of the Poisson bracket
$$P_1 ([\bar {Y} z + Y  \bar{z}]^2, \tau [\bar {Y} z + Y \bar{z}]^2))$$
which simplifies to a term of the form
$$\tau [\bar{Y}^2\dot{Y}^2 - Y^2\dot{\bar {Y}}^2] = \tau (\bar{Y}\dot{Y} - Y \bar{\dot{Y}})(\bar{Y}\dot{Y} +
Y\bar{\dot{Y}}) = C \tau \frac{d}{ds} |Y|^2$$
by the symplectic normalization of the Jacobi eigenfield.   As mentioned above, its
integral vanishes. The $P_0$ term is obtained by applying the square of the bi-differential operator
$$\sum_j (\partial_{z_j} \partial_{\bar{w_j}} - \partial_{\bar{z}_j} \partial_{w_j}) f(z)g(w) $$
to the expression with $f$ on the left and $g$ on the right and setting $z = w$.

  The term
(iii)  is a homogeneous $\#$-square, hence its diagonal part 
contains  only a product $P_0$-term of degree $|z|^4$ and a
$P_2$-term of degree $0$, namely (for j=0,2) the diagonal part of
$$\begin{array}{l} P_j [z^2\dot{\bar{Y}}^2 + 2|z|^2|\dot{Y}|^2 +\bar{z}^2\dot{Y}^2 -  \tau(z^2\bar{Y}^2
 + 2 |z|^2 |Y|^2 + \bar{z}^2Y^2), \\ \\z^2\dot{\bar{Y}}^2 + 2|z|^2|\dot{Y}|^2 +\bar{z}^2\dot{Y}^2 - \tau(z^2\bar{Y}^2
 + 2 |z|^2 |Y|^2 + \bar{z}^2Y^2) ] \end{array}$$
whose average over $\gamma$ has the form for $j = 2, 0$
$$ \int_0^L [a_j |\dot{Y}|^4
+ b_{1 j} \tau 2 \Re (\dot{\bar{Y}}^2 Y^2) + 2 b_{2j} \tau |\dot{Y}Y|^2
+c_j  \tau^2 |Y|^4]ds $$
where
$$a_j= c_j  =  2 P_j(z^2, \bar{z}^2) + 2 P_j(|z|^2, |z|^2) , \;\; b_{1j}  =  - 2P_j(z^2, \bar{z}^2) = b_{2j}
$$
Note here than $P_j$ is symmetric if $j$ is even and that $P_2(z^2, \bar{z}^2) = - 
2 P_2(|z|^2, |z|^2)$. Also
$$ 2 \Re (\dot{\bar{Y}}^2 Y^2) + 2   |\dot{Y}Y|^2 = (\dot{Y} \bar{Y} + \overline{\dot{Y}} Y)^2. $$
In Lemma \ref{4Id} we show that
$$\int_{\gamma} |\dot{y}|^4 ds  
 = 2 \int \tau |Y \dot{Y}|^2   ds +  \Re  \int_{\gamma} \tau (\dot{Y} \bar{Y})^2 ds. $$ 
 The  diagonal
    term  of (iv) has vanishing integral  since it   is  a total derivative. 
Finally, we  the first term of (v) obviously has no diagonal
part while obviously the second term contributes the zeroth order term 
$$C  \int_{\gamma} \tau ds. $$

Adding  the terms of  Lemma \ref{COMM4} and Lemma \ref{dcal0} 
completes the analysis of the Birkhoff normal form coefficient $f_0(I)$ in the Zoll case.
It must vanish in the maximally degenerate case.

\subsection{Off-diagonal terms}

In the MDL Zoll case, existence of the normal form implies that the off-diagonal terms must vanish and
triviality of the normal form implies that the diagonal terms vanish. Above we emphasized the diagonal terms.
We now briefly consider the off-diagonal ones. 

From the commutator terms \eqref{DCALHALF} we obtain off-diagonal terms 

\begin{equation} \label{DCALHALFb} \begin{array}{l}   \sum_{m + n = 3, p + q = 3}  C'_{m,n,p,q}\{[\int_0^L \tau_{\nu}(s)
 [\bar
{Y}^m\cdot Y^n](s) \int_0^s \tau_{\nu}(t)
 [\bar
{Y}^p\cdot Y^q](t)  dt \}  [z^m \bar{z}^n, 
z^p \bar{z}^q], \end{array} \end{equation}
where $[z^m \bar{z}^n, 
z^p \bar{z}^q] = \{z^m \bar{z}^n, 
z^p \bar{z}^q\} + C_3 P_3 (z^m \bar{z}^n, 
z^p \bar{z}^q). $ Thus we obtain the same geometric invariants as in the diagonal case, but with different coefficients determined
by transvectants of monomials. The Poisson bracket gives all possible monomials of degree 4 and $P_3$ gives the two
mononomials of degree 1. 

Lemma \ref{dcal0}  expresses   $\dcal_0|_0 (s,z,\bar{z})$ as a sum of compositions involving $P_4, P_3, P_2, P_1, P_0$
where $P_0 $ is simply multiplication. We then obtain a sum of monomials of degrees $\leq 4$. Adding to the commutator
terms gives coefficients of monomials $z^a \bar{z}^b$ with $a + b \leq 4$ and the coefficient must vanish for every
monomial.  We worked out the monomials $1, |z|^2, |z|^4$ in detail because they are the ones that arise in
the Birkhoff normal form construction in the non-degenerate case. But the vanishing of the  off-diagonal monomial coefficients
is just as informative as the vanishing of the diagonal ones. Since the transvectant coefficients differ, it is likely
that by taking linear combinations of all of the invariants defined as coefficients of monomials, we can simplify
the condition of Theorem \ref{GBO}.

\section{\label{J} Final Remarks}

There are many open problems regarding Zoll Laplacians, some of which might
not be  difficult to answer. As mentioned before, they are tests of the known techniques
in inverse spectral theory.  We close with some observations and speculations
as well as with some further identities which simplify Theorem \ref{GBO}.

\subsection{Analysis of $p_1(q)$}

We observe that  each the integrals over closed geodesics in the expression for 
$p_1(q)$ in Theorem \ref{GBO}  defines a function on $S^*_g S^2$. To see this, we observe
that 
 $(x, \xi)$ determines the closed geodesic $\gamma_{x, \xi}(t) = \pi G^t(x, \xi)$ where $G^t$ is the geodesic
flow. The basis of  Jacobi fields $y_1, y_2$ with Wronskian matrix \eqref{as} equal to the identity
at $t = 0$ is then determined uniquely by $(x, \xi)$.
The various terms above are integrals over $\gamma_{x, \xi}$ and thus each term is a function of $(x, \xi)$. 
Also $\nu = J \dot{\gamma}$ is defined by $(x, \xi)$ and the unique complex structure $J$ of $S^2$. 

Similarly, each term of the  integrand is a function of $(x, \xi, t) \in S^*_g S^2 \times S^1$. 
The entire integrand for the coefficient $c_{2j}$ of $p_1$  defines a function $F_{ j}(x, \xi, t) $ on $S^*_g S^2 \times S^1$ depending on the index $j$. In the maximally degenerate case,
or more generally when $p_1 = 0$,   $F_{2j} = \frac{\partial}{\partial t} f_{2 j}$ for some smooth
function $f_{2j}$. Equivalently, if $\Xi$ generates the geodesic flow, then $F_{2j} = \Xi f_{2j}$.  If we expand
$F$ as a Fourier series in the $t$ variable, we have
$$F(x, \xi, t) = \sum_{n \in \Z. n \not= 0} \hat{F}(x, \xi, n) e^{i n t}, \;\; f(x, \xi, t) = \sum_{n \in \Z. n \not= 0}\frac{\hat{F}(x, \xi, n)}{in}  e^{i n t}. $$
 But $F$ is constructed in a universal way from the Zoll metric.

This condition is satisfied by the standard metric.
An obvious question is whether $\sigma_{A_0} = 0$ for any non-standard Zoll metric $g$. In fact, it is
not even obvious whether there may exist a deformation of the standard metric $g_0$
through Zoll metrics for which $\sigma_{A_0} (g_t) = 0$ for $t \in [0, \epsilon]$ for some $\epsilon > 0$. It is hoped that Theorem \ref{GBO} could provide
some information on this question. In fact it is not even clear that there exist any Zoll surfaces
for which $\sigma_{A_0} (g) \not= 0$ although it was found in \cite{Z1} that no Zoll surfaces of revolution
have this property.

In the case of potential perturbations $\Delta_0 + V$ on the standard $S^2$ the analogous symbol is
$\int_{\gamma}  V ds$ and it is not hard to show that only $V = 0$ is maximally degenerate.
The proof uses the explict formula for the normal form of the operator. This suggests that we must rely on the expression
in Theorem \ref{GBO} in the metric case. 

Of course, in the MDL case all of the terms of the Birkhoff normal form expansion vanish,
and the Zoll Laplacian has the form $\rcal(\rcal + 1) + \scal$ where $\scal$ is smoothing
and is a function of $\rcal$. In \cite{Z1}  the trace of $\scal$ is determined and in principal
one could determine $Tr \scal^n$ for any $n$ using the heat kernel expansion
for $$\begin{array}{lll} \exp - t (\rcal(\rcal + 1) + \scal) & = & \exp (- t \scal) \exp (- t \rcal(\rcal + 1))
\\&&\\
& =  & \sum_{n = 0}^{\infty} \frac{(-t)^n}{n!} Tr \scal^n \exp (- t \Delta_0)\\ &&\\ & \sim & 
Tr e^{- t \Delta_0} + 
\sum_{n = 1}^{\infty} \frac{(-t)^n}{n!} \sum_{m = 0}^{\infty} \frac{(-t)^m}{m!} Tr \scal^n \Delta_0^m,
\;\; (t \to 0), 
\end{array}$$
where $\Delta_0 = \rcal(\rcal + 1)$ has the same
 eigenvalues and multiplicities as the standard Laplacian. One  might try to determine 
$\scal(k)$ completely from these traces using the heat kernel expansion,  or prove that no such $\scal(k)$ can exist. However, there are many terms producing a given order of $t^M$.

One might also think in terms of the infinite dimensional moduli space $\zcal$ of Zoll
metrics on $S^2$ \cite{G3}. Its tangent space at a Zoll metric $g$ consists of the kernel of the
geodesic Radon transform of $g$ on $C^{\infty}(S^2)$. 
In the case of the standard metric,
the kernel consists of the odd functions.
The function $F(x, \xi, t; g)$ is constructed in a universal way from the 4-jet  $g, Dg, D^2 g, D^3 g, 
D^4 g$  of
the metric on the moduli space of Zoll metrics. Hence the terms of $p_1(g)$ define `natural'
maps from $\zcal \to C^{\infty}(S^*_g M )$ with equivariance under $G^t$, so $p_1(g)$
is a natural function on $G(M, g)$. Of course, the symbol of $A_{-1}$ is of this kind. There might
exist a geometric  functional on $\zcal$ which is critical exactly at MDL metrics.  But since
it is a spectral condition, the functional would most likely be a spectral one. 


\subsection{Balancing issues}

In \cite{Z3} this question was approached via the spectral projections kernels $\Pi_k(x, y)$ for the 
eigenvalue clusters rather than through Gaussian beams and normal forms. In the maximally
degenerate case, the eigenmap embeddings $y \to \Pi_k(\cdot, y): S^2 \to  \hcal_k$ are almost isometric
up to order $k^{-\infty}$. Here $\hcal_k$ is the Hilbert space of eigenfunctions of
$\Delta_g$ for the kth cluster (in the MDL case); see \S \ref{MDL} for background. The metric $g$ would have to be the standard one if the maps
were exactly isometric. In the Gaussian beam setting we instead have the maps
$$H_k: G(S^2, g) \simeq S^2 \to \hcal_k, \;\;\; H_k(\gamma) = \phi_k^{\gamma},  $$
although to be precise $\phi_k^{\gamma}$ is defined only up to a constant (depending
on a choice of basepoint for $\gamma$), and it would be better to define the map
to the projective space of $\hcal_k$. Again, the map $H_k$ is almost isometric in
the MDL case, as follows by combining the formula for $\Pi_k$ in \S \ref{MDL} with
the results of \cite{Z3}.

The article
preceded \cite{Z3} predated  the modern era of balanced K\"ahler metrics and Bergman metrics but
is somewhat  related in spirit.  There is a well-known analogy between Zoll surfaces and positive
line bundles over K\"ahler surfaces in which the unit tangent bundle of the Zoll surface
is parallel to the Hermitian metric unit bundle. In both cases the unit bundle is an $S^1$-bundle. Indeed, it is more than an analogy since the unit tangent
bundle is a circle bundle over the space $G(S^2, g)$ of geodesics of $g$ and is the unit
bundle for a  Hermitian metric of positive $(1,1)$ curvature on the  holomorphic line $T \CP^1$ over $G(S^2, g)$. It is not clear
what if any implications maximal degeneracy of $\Delta_g$  has for the Szeg\"o kernel of this line bundle or for its  $\dbar$-Laplacian. For instance, the spectral projections kernel $\Pi_k$
of a MDL is almost constant on the diagonal (modulo a term of order $k^{- \infty}$). Is
the same true for the associated Szeg\"o kernel of $H^0(\CP^1, (T\CP^1)^{\otimes k})$?

\subsection{\label{RELATIONS}Relations among terms}

We note some relations between the integrals in Theorem \ref{GBO}.  They allow
for some simplification of the expression for $p_1(q)$ and suggest that other simplifications
might exist. They also help in comparing Theorem \ref{GBO} with the formulae in
\cite{Z1,Z3} but for the sake of brevity we do not make a detailed comparison.

\begin{lem}  For any solution of the Jacobi equation of any Zoll surface,
$$ \int \tau y^2 \dot{y}^2   ds=  \frac{1}{3} \int (\dot{y})^4 ds. $$
\end{lem}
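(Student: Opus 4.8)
The plan is to derive the identity by repeated integration by parts using only Jacobi's equation $\ddot y + \tau y = 0$ along the closed geodesic, exploiting periodicity of $y$ (since $P_\gamma = I$ on a Zoll surface) to discard all boundary terms. The strategy is to start from a cleverly chosen product whose $s$-derivative integrates to zero, expand it, and substitute $\ddot y = -\tau y$ wherever a second derivative appears, thereby trading curvature factors for derivatives of $y$ and vice versa.

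First I would consider the function $y^3 \dot y$ and integrate its derivative over $\gamma$: $\int_\gamma \frac{d}{ds}(y^3 \dot y)\,ds = 0$ by periodicity. Expanding, $\frac{d}{ds}(y^3\dot y) = 3 y^2 \dot y^2 + y^3 \ddot y = 3 y^2 \dot y^2 - \tau y^4$, which gives the auxiliary relation $\int_\gamma \tau y^4\,ds = 3\int_\gamma y^2 \dot y^2\,ds$. This is not yet what we want, so next I would try the function $y \dot y^3$: here $\frac{d}{ds}(y \dot y^3) = \dot y^4 + 3 y \dot y^2 \ddot y = \dot y^4 - 3\tau y^2 \dot y^2$, and integrating over $\gamma$ yields exactly $\int_\gamma \dot y^4\,ds = 3\int_\gamma \tau y^2\dot y^2\,ds$, which is the claimed identity. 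So in fact the whole proof reduces to integrating $\frac{d}{ds}(y\dot y^3)$ and applying Jacobi's equation to the term $y\dot y^2\ddot y$.

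The only subtlety — and the one point worth checking carefully — is that $y$ here is a genuine periodic normal Jacobi field, so that $y\dot y^3$ is periodic in $s$ with the primitive period $L$ of $\gamma$ and the total-derivative integral genuinely vanishes; on a Zoll surface this is guaranteed because $P_\gamma = I$, as recalled in \S\ref{JF}. (If $y$ were only quasi-periodic this argument would fail, which is precisely the phenomenon the paper is emphasizing.) I expect no real obstacle: the computation is a single integration by parts, and the main thing to get right is bookkeeping of the constant $3$ and the sign coming from $\ddot y = -\tau y$. One could also give the same conclusion by combining the two relations above with a third obtained from $\frac{d}{ds}(y^2\dot y^2)$, but the direct route through $y\dot y^3$ is cleanest and I would present only that.
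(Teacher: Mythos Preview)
Your proof is correct and essentially identical to the paper's: the paper multiplies Jacobi's equation by $y\dot{y}^2$ and then integrates $\int y\dot{y}^2\ddot{y}\,ds$ by parts, which is precisely your computation of $\int_\gamma \frac{d}{ds}(y\dot{y}^3)\,ds=0$ written in reverse. Your presentation via the total derivative of $y\dot{y}^3$ is slightly cleaner, and the auxiliary relation from $y^3\dot{y}$ you mention first is indeed unnecessary, as you note.
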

To prove this we multiply the Jacobi equation by $\dot{y}^2 y$ and use that
$$\int \dot{y}^2 y \ddot{y} ds  = - \int \dot{y} \frac{d}{ds} (y \dot{y}^2) ds = -  \int (2 \dot{y}^2 y \ddot{y} - \dot{y}^4) d. $$
Jacobi's equation implies that 
$$- \int \dot{y}^2 y \ddot{y}  ds=  \int \tau y^2 \dot{y}^2   ds.  $$
Combining the formulae concludes the proof.

A variant that applies directly to Theorem \ref{GBO} is the following.
\begin{lem} \label{4Id}  For an Zoll surface and close geodesic
$$\Im  \int_{\gamma} \tau (\dot{y} \bar{y})^2 ds = 0, \;\; \int_{\gamma} |\dot{y}|^4 ds  - 
2 \int \tau |y \dot{y}|^2   ds = \Re  \int_{\gamma} \tau (\dot{y} \bar{y})^2 ds. $$ 
This allows us to remove the third term (the $b_2$ term) and with universal changes in the coefficients
of the first two terms.
\end{lem}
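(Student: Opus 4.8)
The plan is to prove \textbf{Lemma \ref{4Id}} by exploiting Jacobi's equation $\ddot y + \tau y = 0$ for the complex periodic Jacobi field $y = Y$ (with $Y(0)=1,\ Y'(0)=i$) together with periodicity on the Zoll surface, which makes every boundary term in an integration by parts over $\gamma$ vanish.

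First I would establish the identity $\Im \int_\gamma \tau(\dot y \bar y)^2\,ds = 0$. The natural route is to observe that $\tau(\dot y \bar y)^2 = -(\ddot y/y)(\dot y \bar y)^2$ is awkward, so instead I would integrate by parts directly. Write $\tau (\dot y \bar y)^2 = (\tau \dot y \bar y)(\dot y \bar y)$ and use $\tau \dot y \bar y = -\ddot y \,(\dot y/y)\cdot y\bar y$; a cleaner path is to note that on a Zoll surface the Wronskian normalization gives $\dot Y \bar Y - Y \overline{\dot Y} = $ const (indeed $\omega(Y,\bar Y)$ is constant), hence $\Re(\dot Y \bar Y) = \tfrac12 \tfrac{d}{ds}|Y|^2$ and $\Im(\dot Y \bar Y)$ is a constant. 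Then $(\dot Y \bar Y)^2 = (\Re(\dot Y\bar Y))^2 - (\Im(\dot Y \bar Y))^2 + 2 i\, \Re(\dot Y\bar Y)\Im(\dot Y\bar Y)$, so $\Im(\dot Y\bar Y)^2 = 2\,\Im(\dot Y \bar Y)\cdot \Re(\dot Y \bar Y) = \big(\text{const}\big)\cdot \tfrac{d}{ds}|Y|^2$; multiplying by $\tau$ and integrating over the closed geodesic, one integrates by parts once — $\int_\gamma \tau \,\tfrac{d}{ds}|Y|^2\,ds = -\int_\gamma \tau_s |Y|^2\,ds$ — and this latter integral already vanishes by the argument given just above Lemma \ref{4Id} in the excerpt (multiply Jacobi's equation for $Y$ by $\bar Y$, integrate, integrate by parts twice). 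That yields the first assertion.

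Next I would prove $\int_\gamma |\dot Y|^4\,ds - 2\int_\gamma \tau |Y\dot Y|^2\,ds = \Re \int_\gamma \tau(\dot Y \bar Y)^2\,ds$. The idea is to start from the quantity $\int_\gamma \dfrac{d}{ds}\big(\overline{\dot Y}\,\dot Y^2\,\bar Y\big)\,ds = 0$ (periodicity), expand the derivative using $\ddot Y = -\tau Y$, and collect terms. Concretely, $\tfrac{d}{ds}(\overline{\dot Y}\dot Y^2 \bar Y) = \overline{\ddot Y}\dot Y^2\bar Y + 2\overline{\dot Y}\dot Y \ddot Y \bar Y + \overline{\dot Y}\dot Y^2 \overline{\dot Y} = -\tau \bar Y \dot Y^2 \bar Y - 2\tau \overline{\dot Y}\dot Y |Y|^2\cdot(\text{sign})+ |\dot Y|^2 \dot Y \overline{\dot Y}$; being careful with which factor is conjugated, the real part of this integrates to zero and, after also using the companion identity $\int_\gamma \tfrac{d}{ds}(\text{analogous term})\,ds = 0$ and the first part of the lemma to discard the $\Im$ contributions, rearranging gives exactly the stated relation. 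The substitution then justifies replacing $b_{2j}\tau\Re(\bar Y\dot Y)^2$ by a combination of the $|\dot Y|^4$ and $\tau|Y\dot Y|^2$ terms, absorbing the change into universal redefinitions of $a_j$ and $b_{1j}$ in Theorem \ref{GBO}.

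The main obstacle I anticipate is bookkeeping: making sure the correct factors are conjugated when expanding $\tfrac{d}{ds}$ of the quartic monomials in $Y,\dot Y,\bar Y,\overline{\dot Y}$, and tracking real versus imaginary parts so that the pieces that ``should'' cancel (via the first identity and via $\int_\gamma \tau_s|Y|^2 = 0$) actually do. There is no analytic difficulty — every step is an integration by parts killed by periodicity plus the scalar Jacobi equation — but the algebra is exactly the kind that, as the author repeatedly warns, is error-prone by hand; I would double-check by testing on the round metric with $\tau\equiv 1$ and $Y = e^{is}$, where $|\dot Y|^4 = 1$, $|Y\dot Y|^2 = 1$, and $(\dot Y\bar Y)^2 = (i)^2 = -1$, so the relation reads $2\pi - 2\cdot 2\pi = \Re(-1)\cdot 2\pi$, i.e. $-2\pi = -2\pi$, consistent.
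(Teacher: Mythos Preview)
Your approach is correct and, for the main identity, essentially identical to the paper's: expanding $\frac{d}{ds}(\overline{\dot Y}\,\dot Y^{2}\,\bar Y)$ with $\ddot Y=-\tau Y$ and integrating over the closed geodesic is exactly the paper's step of multiplying Jacobi's equation by $\bar y\,\dot y\,\overline{\dot y}$ and integrating by parts once; both give
\[
\int_\gamma |\dot Y|^4\,ds \;-\; 2\int_\gamma \tau|Y\dot Y|^2\,ds \;=\; \int_\gamma \tau(\dot Y\bar Y)^2\,ds .
\]
The one difference is that you treat $\Im\int_\gamma \tau(\dot Y\bar Y)^2\,ds=0$ as a separate lemma, proved via the Wronskian (so $\Im(\dot Y\bar Y)$ is constant) and the already-established $\int_\gamma \tau_s|Y|^2\,ds=0$. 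That argument is valid, but the paper is more economical: since the left-hand side of the displayed relation is manifestly real, the vanishing of the imaginary part of the right-hand side is automatic, and no separate Wronskian step is needed. Your round-sphere check with $Y=e^{is}$ is a good sanity test and is consistent with both derivations.
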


\begin{proof}


Consider the term,
$$
\int \tau |y \dot{y}|^2   ds = \int \tau y \bar{y} \dot{y} \overline{\dot{y}} ds. $$
Multiply the Jacobi equation for $y$ by $\bar{y} \dot{y} \overline{\dot{y}}$. 
Jacobi's equation implies that 
$$
\int \tau |y \dot{y}|^2   ds = -  \int \ddot{y} \bar{y} \dot{y} \overline{\dot{y}} ds.$$
We also have 
$$\begin{array}{lll} -  \int \ddot{y} \bar{y} \dot{y} \overline{\dot{y}} ds   & = &   \int \dot{y} \frac{d}{ds} (\bar{y} \dot{y} \overline{\dot{y}}) ds 
\\ &&\\ && = \int \dot{y} (\frac{d}{ds} (\bar{y} )  \dot{y} \overline{\dot{y}}) ds 
+ \int \dot{y} (\bar{y} \frac{d}{ds} \dot{y} )\overline{\dot{y}}) ds + \int \dot{y} (\bar{y} \dot{y} \frac{d}{ds} \overline{\dot{y}}) ds 
\\&&\\
& = &   \int \dot{y}^2\overline{\dot{y}}^2 ds 
- \int \dot{y} (\bar{y} \tau y )\overline{\dot{y}}) ds - \int \dot{y} (\bar{y} \dot{y} \tau  \overline{y}) ds  . \end{array}$$
Hence
$$
2 \int \tau |y \dot{y}|^2   ds =  \int |\dot{y}|^4 ds 
- \int \tau (\dot{y} \bar{y})^2 ds. $$

All the terms are real except the last one, so its imaginary part is zero. 

\end{proof}

We may also simplify the ``commutator term'' of Theorem \ref{GBO}. A real orthogonal  Jacobi
field $Y = y \nu$ along $\gamma$ defines a variation $\gamma_r(t)$ of $\gamma(t)$
with variational vector field $Y$. We then have a 1-parameter family of Jacobi fields
$Y_r(t)$ on $\gamma_r(t)$ and can differentiate with respect to $r$. We denote the
derivative of $y_r$ by $y_{\nu}$ (possibly not good notation). 
For any Jacobi field  we get the equation  $ y_{\nu}'' + \tau_{\nu} y^2 + \tau y_{\nu} = 0 $
for this varation. Multiply by a section Jacobi field  $y_2$ to get 
 $y_{\nu}'' y_2 + \tau_{\nu} y^2 y_2 + \tau y_{\nu} y_2= 0 $ Therefore
$$\tau_{\nu} y^2 y_2 = - (y_{\nu}'' y_2 + \tau y_{\nu} y_2). $$
It follows that 
$$ \begin{array}{lll} - \int_0^s\tau_{\nu}(t)\bar {Y}^nY^m(t)dt & = &  \int_0^s (y_{\nu}'' y_2 + \tau y_{\nu} y_2) )dt \\&&\\
&&
=  y_{\nu}' y_2 |_0^s - \int_0^s (y_{\nu}' y_2' + \tau y_{\nu} y_2)\\&&\\&&
=  y_{\nu}' y_2 |_0^s -  (y_{\nu} y_2') |_0^s = Y_{\nu}'(s) \overline{Y}(s) - Y_{\nu}(s) \overline{Y}' (s) 
- Y_{\nu}'(0) + i  Y_{\nu}(0).  \end{array}$$
Since $Y = y_1 + i y_2, \overline{Y} = y_1 - i y_2$ if $y_2 = \overline{Y}$ we have $y_2(0) = 1, y_2'(0) = - i$. 
Since $Y(0) = 1, Y'(0) = i$ for all $\gamma$, the variations of these quantities vanish and the last two terms
are zero.   Therefore,
$$\begin{array}{l} \Im \{\int_0^{2 \pi} \tau_{\nu}(s)\bar
{Y}^mY^n(s)[\int_0^s\tau_{\nu}(t)\bar {Y}^nY^m](t)dt] ds\} \\ \\   =
\Im \{\int_0^{2 \pi} \tau_{\nu}(s)\bar
{Y}^mY^n(s)[ Y_{\nu}'(s) \overline{Y}(s) - Y_{\nu}(s) \overline{Y}' (s) ] ds\} \\ \\ 
   =
 \int_0^{2 \pi} \tau_{\nu}(s) \Im \{\bar
{Y}^mY^n(s)[ Y_{\nu}'(s) \overline{Y}(s) - Y_{\nu}(s) \overline{Y}' (s) ]\} ds 
 .\end{array} $$
For instance, if $m = 2, n = 1$ the ``commutator term'' is equal to
$$ \int_0^{2 \pi} \tau_{\nu}(s) \Im \{\bar
[ Y(s) Y_{\nu}'(s) \overline{Y}^3(s) - Y(s) Y_{\nu}(s) \overline{Y(s)}^2  \overline{Y}' (s) ]\} ds  $$

Also, the Wronskian
condition
$$Y(s) \overline{Y}'(s) - Y'(s)  \overline{Y}(s) = i $$
holds for all $\gamma$ and therefore its variation vanishes. Thus,
$$Y_{\nu} (s) \overline{Y}'(s) - Y_{\nu}'(s)  \overline{Y}(s)  + Y(s) \overline{Y}'_{\nu} (s) - Y'(s)  \overline{Y}_{\nu}(s) 
= 2 \Im (Y_{\nu} (s) \overline{Y}'(s) - Y_{\nu}'(s)  \overline{Y}(s) )   = 0. $$ 
Thus,
$$ \int_0^{2 \pi} \tau_{\nu}(s) \Im \{\bar
{Y}^mY^n(s)[ Y_{\nu}'(s) \overline{Y}(s) - Y_{\nu}(s) \overline{Y}' (s) ]\} ds  =
 \int_0^{2 \pi} \tau_{\nu}(s) \Im \{\bar
{Y}^mY^n(s)\} [ Y_{\nu}'(s) \overline{Y}(s) - Y_{\nu}(s) \overline{Y}' (s) ] ds . $$

\end{document}